\newtheorem{definition}{Defn.}
\newtheorem{cor}{Cor.}
\newtheorem{theo}{Theo.}
\newtheorem{prop}{Prop.}
\begin{document}

\begin{titlepage}
\begin{center}

    \Large\textbf{Framing structural identifiability in terms of parameter symmetries}
       
    \normalsize
        
    \vspace{0.25cm}
    \setcounter{footnote}{0}
    \setlength{\footnotemargin}{0.8em}
    {\normalsize Johannes G Borgqvist\footnote{Corresponding author: \url{johborgq@chalmers.se}}\footnote{Mathematical Sciences, Chalmers University of Technology, Gothenburg, Sweden}, Alexander P Browning\footnote{Mathematical Institute, University of Oxford, United Kingdom\label{Oxford}}\footnote{School of Mathematics and Statistics, University of Melbourne, Melbourne, Australia\label{Melbourne}}, Fredrik Ohlsson\footnote{Department of Mathematics and Mathematical Statistics, Umeå University, Umeå, Sweden}, Ruth E Baker\footref{Oxford}}
    \setlength{\footnotemargin}{1.8em}
        
\abstract{ A key step in mechanistic modelling of dynamical systems is to conduct a structural identifiability analysis. This entails deducing which parameter combinations can be estimated from a given set of observed outputs. The standard differential algebra approach answers this question by re-writing the model as a higher-order system of ordinary differential equations that depends solely on the observed outputs. Over the last decades, alternative approaches for analysing structural identifiability based on Lie symmetries acting on independent and dependent variables as well as parameters, have been proposed. However, the link between the standard differential algebra approach and that using full symmetries remains elusive. In this work, we establish this link by introducing the notion of parameter symmetries, which are a special type of full symmetry that alter parameters while preserving the observed outputs. Our main result states that a parameter combination is locally structurally identifiable if and only if it is a differential invariant of all parameter symmetries of a given model. We show that the standard differential algebra approach is consistent with the concept of structural identifiability in terms of parameter symmetries. We present an alternative symmetry-based approach for analysing structural identifiability using parameter symmetries. Lastly, we demonstrate our approach on two well-known models in mathematical biology. }    
    
    \textbf{Keywords:}\\ Local structural identifiability, parameter symmetries, universal parameter invariants, differential algebra approach. \\      
\end{center}
\end{titlepage}


\section{Introduction}

Given the abundance of experimental data, a substantial amount of effort in mechanistic modelling in biology goes into considering model validation and experimental design. A crucial preliminary step, prior to data collection, is to conduct a \emph{structural identifiability analysis} to determine which model parameters can, in principle, be estimated from a given experimental setup. Structural identifiability is a theoretical property of a mechanistic model that can be investigated analytically under the assumption of perfect, noise-free observations of the model output. In this context, structural identifiability addresses whether parameters can be uniquely inferred from the observed output. A model parameter is said to be globally structurally identifiable if it can be uniquely determined, and locally structurally identifiable if it admits only finitely many distinct values consistent with the observed output.

For models formulated as systems of ordinary differential equations (ODEs), a wide range of methods for assessing structural identifiability is available (see~\cite{chis2011structural} for a review). The standard approach for establishing global structural identifiability of ODE models with rational right-hand sides is based on differential algebra~\cite{ljung1994global,hong2020global,saccomani2001new,walter1982global}. This approach reformulates the original system as an equivalent input–output representation comprising polynomial differential equations, potentially of higher order, involving only the observed inputs, outputs, and model parameters. Structural identifiability is then assessed by constructing a map between the original model parameters and the parameter combinations appearing in the input–output equations, and determining whether this map is injective. Identifiable parameter quantities can subsequently be recovered by exploiting the polynomial structure of the input–output system, typically through analysis of the corresponding coefficients.

To illustrate the differential algebra approach, let $u(t)$ and $v(t)$ denote the concentrations of two chemical species that depend on time, $t$, and which satisfy two decoupled ODEs
\begin{equation}
  \dfrac{\mathrm{d}u}{\mathrm{d}t}=\kappa_{1}-\lambda{u},\quad\dfrac{\mathrm{d}v}{\mathrm{d}t}=\kappa_{2}-\lambda{v},
  \label{eq:toy_individual}
\end{equation}
where both species decay at rate $\lambda>0$ and are synthesised at rates rates $\kappa_{1}>0$ and $\kappa_{2}>0$, respectively. We assume that we observe only the total concentration $y(t)=u(t)+v(t)$, yielding the following model for the output $y$:
\begin{equation}
  \dfrac{\mathrm{d}y}{\mathrm{d}t}+\lambda{y}-(\kappa_{1}+\kappa_{2})=0.
  \label{eq:toy_intro}        
\end{equation}
Next, we extract the coefficients in front of $\{\mathrm{d}y/\mathrm{d}t,y,1\}$ resulting in the set $\{1,\lambda,\kappa_{1}+\kappa_{2}\}$. Clearly, the set of identifiable parameter quantities is given by  $\{\lambda,\kappa_{1}+\kappa_{2}\}$ implying that the decay rate $\lambda$ is (globally) structurally identifiable. On the other hand, the synthesis rates $\kappa_{1}$ and $\kappa_{2}$ are individually non-identifiable whereas the sum $\kappa_{1}+\kappa_{2}$ is globally structurally identifiable.

In addition to the standard differential algebra approach, other methodologies for analysing the structural identifiability of systems of first-order ODEs have also been developed. In particular, for more than two decades, methods for analysing structural identifiability using a special type of \emph{Lie point symmetries} we refer to as \emph{full symmetries} have been developed~\cite{yates2009structural,merkt2015higher,massonis2020finding,castro2020structuralIdentifiability,villaverde2022symmetries}. Full symmetries are transformations called $C^{\infty}$ diffeomorphisms which map solutions of the system of ODEs of interest to other solutions while simultaneously preserving the observed outputs. In particular, they act on the independent variable (corresponding to time in our examples), the dependent variables (corresponding to the states), the outputs and the parameters.

An example of a class of full symmetries studied by Castro and de Boer~\cite{castro2020structuralIdentifiability} is scalings, in which state variables and parameters are transformed multiplicatively by scaling factors that leave the model equations invariant. Within this framework, Castro and de Boer analysed both the structural identifiability of parameters and the related notion of structural observability of states, defined as the ability to reconstruct state variables from observed outputs. In the presence of such scaling symmetries, parameters are structurally identifiable and states are structurally observable if and only if they are fixed points of the induced scaling action; equivalently, the only scaling transformation that preserves the observed outputs is the identity transformation, for which all scaling factors are equal to one. Critically, not all models possess scalings as full symmetries, and there are models with other types of full symmetries that can also be used as a basis for deducing structural identifiability~\cite{villaverde2021testing}. Hence, relying solely on scaling symmetries in the context of structural identifiability can be misleading~\cite{villaverde2021testing}.

Importantly, full symmetries generalise the classical notion of symmetries of ordinary differential equations, which act on the independent and dependent variables and map solutions to solutions, but do not act on model parameters or observed outputs. Classical symmetry methods are primarily used to construct analytical solutions, reduce the order of ODEs, or classify families of differential equations with prescribed symmetry properties~\cite{bluman1989symmetries,hydon2000symmetry,olver2000applications,stephani1989differential}. A well-known limitation of classical symmetry analysis for first-order ODE systems is that their symmetry groups are infinite-dimensional. As a consequence, the associated linearised symmetry conditions—whose solutions define the infinitesimal generators of the symmetry group—are underdetermined, with more unknown infinitesimals than equations. In practice, this necessitates the introduction of restrictive ans\"{a}tze in order to obtain tractable solutions. This difficulty is exacerbated in the context of structural identifiability analysis, where full symmetries introduce additional infinitesimals associated with model parameters, further increasing the degree of underdetermination. As a result, recent work on symmetry-based identifiability analysis has focused on the automated computation of full symmetries~\cite{merkt2015higher,massonis2020finding}. These approaches impose polynomial ans\"{a}tze on the infinitesimals—taken to be multivariate polynomials in the independent variables, state variables, and parameters—thereby reducing the linearised symmetry conditions to a finite-dimensional linear system that can be solved algorithmically, for example using Gaussian elimination. Despite these advances, fundamental questions remain unresolved, notably the precise relationship between full symmetries and structural identifiability, and the extent to which symmetry-based approaches are consistent with the classical differential algebra framework. Resolving these questions is essential if the full machinery of symmetry methods is to be systematically exploited for identifiability analysis in mechanistic models.

In this work, we establish the link between structural identifiability and full symmetries by introducing the notion of \textit{parameter symmetries}. These are full symmetries solely acting on parameters; that is, Lie symmetries acting as re-parametrisations of the model of interest. Provided the notion of parameter symmetries, we show that a parameter is locally structurally identifiable if and only if it is a so-called \textit{universal parameter invariant}---a differential invariant of all parameter symmetries of a given model. Next, we demonstrate that the standard differential algebra approach for deducing global structural identifiability will always find universal parameter invariants and thus it is consistent with the notion of parameter symmetries. Subsequently, we develop an alternative methodology for deducing local structural identifiability based on parameter symmetries (Algorithm \ref{alg:CaLinInv}). Specifically, the method: (i) re-writes the original system of ODEs in terms of the observed outputs which are called the \emph{canonical coordinates}; (ii) finds parameter symmetries by solving the linearised symmetry conditions; and (iii) calculates universal parameter invariants. Our approach finds both the locally structurally identifiable parameter quantities as well as the parameter transformations preserving these parameter quantities, namely the parameter symmetries. Lastly, we conduct local structural identifiability analyses of a glucose-insulin model and an epidemiological SEI model using Algorithm \ref{alg:CaLinInv}, yielding insights about identifiable parameter combinations as well as the family of parameter transformations which leaves them invariant. Overall, we demonstrate how local structural identifiability of mechanistic models can be understood in terms of parameter symmetries. 

For the interested reader, we present definitions of global and local structural identifiability as well as the standard differential algebra approach in Section 2. Thereafter, we present the theoretical link between our parameter symmetries and local structural identifiability in Section 3. For the reader who is interested in the application of Lie symmetries in local structural identifiability analyses of models consisting of systems of ODEs, proceeding to application of parameter symmetries in local structural identifiability analyses of specific models in biology in Section 4 is sufficient.


\section{Definitions of structural identifiability}

We briefly present definitions of both global and local structural identifiability followed by an algorithm for the standard differential algebra approach. To this end, consider the following \emph{state-output system} of first-order ODEs and associated outputs
\begin{equation}
\begin{split}
    \dfrac{\mathrm{d}\mathbf{x}}{\mathrm{d}t}&=\mathbf{f}\left(t,\mathbf{x},\boldsymbol{\theta}\right),\\
    \mathbf{y}&=\mathbf{h}\left(t,\mathbf{x},\boldsymbol{\theta}\right).
    \end{split}
    \label{eq:ODE_sys_full}
\end{equation}
Here, $t\in\mathbb{R}$ is the independent variable corresponding to time, $\boldsymbol{\theta}\in\mathbb{R}^{p}$ is the vector of $p$ parameters, $\mathbf{x}(t,\boldsymbol{\theta})\in\mathbb{R}^{n}$ are the $n\geq1$ dependent variables corresponding to the states of the system and $\mathbf{y}(t,\mathbf{x},\boldsymbol{\theta})\in\mathbb{R}^{m}$ are the $1\leq{m}\leq n$ observed outputs. In the context of structural identifiability, we ask ourselves which parameters can be determined based on a set of $1\leq m\leq n$ observed outputs, $\mathbf{y}$, given $n$ states, $\mathbf{x}$? Here, we make two important assumptions, namely that the functions $\mathbf{f}$ and $\mathbf{h}$ are analytical and rational functions of the independent and dependent variables and that they are infinitely differentiable. Under these assumptions, we can always reduce the original state-output system of first-order ODEs to an \textit{output system} corresponding to a (potentially higher order) polynomial system of ODEs solely depending on the outputs~\cite{ljung1994global}
\begin{equation}
    \Delta\left(t,\frac{\mathrm{d}^{N}\mathbf{y}}{\mathrm{d}t^{N}},\frac{\mathrm{d}^{N-1}\mathbf{y}}{\mathrm{d}t^{N-1}},\ldots,\dfrac{\mathrm{d}^{2}\mathbf{y}}{\mathrm{d}t^{2}},\dfrac{\mathrm{d}\mathbf{y}}{\mathrm{d}t},\mathbf{y},\boldsymbol{\theta}\right)=\mathbf{0},\\
    \label{eq:ODE_sys_output}
\end{equation}
for some power $N\in\mathbb{N}_{+}$ and where $\Delta$ is a vector-valued function of multivariate polynomials. The output system in Eq. \eqref{eq:ODE_sys_output} is output-equivalent to the original state-output system in Eq. \eqref{eq:ODE_sys_full} meaning that any outputs $\mathbf{y}$ that are solutions of the latter system are also solutions of the former system and \textit{vice versa}. The standard differential algebra approach for elucidating structural identifiability analyses the output system, and thus requires that the functions $\mathbf{f}$ and $\mathbf{h}$ defining the state-output system be rational functions of the states. Unlike the standard differential algebra approach, the approach for analysing structural identifiability based on Lie symmetries does not depend on these assumptions. However, since the the aim of this work is to establish the link between these two approaches we restrict ourselves to rational functions $\mathbf{f}$ and $\mathbf{h}$. Provided this problem formulation, we next present the definition of global and local structural identifiability as well as the standard differential algebra approach for conducting a global structural identifiability analysis. 


\subsection{Global and local structural identifiability}

We first present the definition of global structural identifiability~\cite{renardy2022structural,meshkat2015identifiability,meshkat2014repara}. 

\medskip
\begin{definition}[Global structural identifiability of parameters]
  An individual parameter $\theta_{j}\in\boldsymbol{\theta}$, ${j}\in{1,\ldots,p}$ is globally \textit{structurally identifiable} if for almost every value $\boldsymbol{\theta}^{\star}$ and for almost all initial conditions the following holds:
  \begin{equation}
    \mathbf{y}(t,\boldsymbol{\theta})=\mathbf{y}(t,\boldsymbol{\theta}^{\star})\quad\forall{t}\in\mathbb{R}\implies{\theta}_{j}=\theta_{j}^{\star}\quad\quad\forall\,\boldsymbol{\theta},\boldsymbol{\theta}^{\star}\in\mathbb{R}^{p}.
    \label{eq:structural identifiability}
  \end{equation}
\label{def:structural identifiability}
\end{definition}
The idea is essentially that a parameter is globally structurally identifiable if any change in the parameter results in a change in the output. A closely related concept is that of local structural identifiability which means that a local change in parameters results in a change in the output. Here, the effect of changing parameters on observed outputs is analysed in a ball of radius $\rho>0$ centred around a given parameter vector $\boldsymbol{\theta}_{C}\in\mathbb{R}^{p}$, and we denote this ball by $\mathcal{B}_{\rho}\left(\boldsymbol{\theta}_{C}\right)\coloneqq\left\{\boldsymbol{\theta}\in\mathbb{R}^{p}:|\boldsymbol{\theta}-\boldsymbol{\theta}_{C}|<\rho\right\}$.

\begin{definition}[Local structural identifiability of parameters]
  An individual parameter $\theta_{j}\in\boldsymbol{\theta}$, ${j}\in{1,\ldots,p}$ is locally \textit{structurally identifiable} if for almost every value $\boldsymbol{\theta}^{\star}$ and almost all initial conditions the following holds:
  \begin{equation}
    \mathbf{y}(t,\boldsymbol{\theta})=\mathbf{y}(t,\boldsymbol{\theta}^{\star})\quad\forall{t}\in\mathbb{R}\implies{\theta}_{j}=\theta_{j}^{\star}\quad\quad\forall\,\boldsymbol{\theta},\boldsymbol{\theta}^{\star}\in\mathcal{B}_{\rho}\left(\boldsymbol{\theta}_{C}\right),
    \label{eq:local_structural identifiability}
  \end{equation}
  for some radius $\rho>0$ and some centre $\boldsymbol{\theta}_{C}\in\mathbb{R}^p$. 
\label{def:local_structural identifiability}
\end{definition}

A globally structurally identifiable parameter is always locally structurally identifiable, but the converse statement is not always true. Similarly, we can also define globally identifiable parameter combinations~\cite{meshkat2015identifiability,meshkat2014repara}, such as the sum of two parameters, in terms of functions $f:\mathbb{R}^{p}\mapsto{\mathbb{R}}$ of the parameters (Defn. \ref{def:global_structural identifiability_comb})

\begin{definition}[Global structural identifiability of parameter combinations]
  The function $f:\mathbb{R}^{p}\mapsto{\mathbb{R}}$, where $f(\boldsymbol{\theta})$ defines a parameter combination, is globally \textit{structurally identifiable} if for almost every value $\boldsymbol{\theta}^{\star}$ and almost all initial conditions the following holds:
  \begin{equation}
    \mathbf{y}(t,\boldsymbol{\theta})=\mathbf{y}(t,\boldsymbol{\theta}^{\star})\quad\forall{t}\in\mathbb{R}\implies{f}(\boldsymbol{\theta})=f(\boldsymbol{\theta}^{\star})\quad\quad\forall\,\boldsymbol{\theta},\boldsymbol{\theta}^{\star}\in\mathbb{R}^{p}\,.
    \label{eq:global_structural identifiability_comb}
  \end{equation}
\label{def:global_structural identifiability_comb}
\end{definition}

Locally identifiable parameter combinations are defined analogously~\cite{meshkat2015identifiability,meshkat2014repara}. Subsequently, we present the standard differential algebra approach for deducing 
global structural identifiability. 


\subsection{The standard differential algebra approach}

he standard differential algebra approach enables a global structural identifiability analysis for systems of ordinary differential equations whose right-hand sides $\mathbf{f}$ and output functions $\mathbf{h}$ are rational, as in Eq.~\eqref{eq:ODE_sys_full}. For such models, the approach identifies globally structurally identifiable parameter combinations through a sequence of algebraic steps~\cite{renardy2022structural}. First, the original state–output system in Eq.~\eqref{eq:ODE_sys_full} is transformed into an equivalent, potentially higher-order, output-only system of differential equations, as in Eq.~\eqref{eq:ODE_sys_output}. Second, this input–output representation is rewritten as a system of monic polynomial differential equations. Third, the coefficients of these polynomials are extracted. Finally, the globally structurally identifiable parameter combinations are inferred from these coefficients.

The validity of this procedure follows from the output-equivalence of the original state–output system and the derived output system. Specifically, for any choice of parameters $\boldsymbol{\theta}$, the outputs $\mathbf{y}$ generated by a solution $\mathbf{x}$ of the original system in Eq.~\eqref{eq:ODE_sys_full} also satisfy the corresponding output system in Eq.~\eqref{eq:ODE_sys_output}~\cite{audoly2001global}. In this sense, the output system provides an exhaustive characterisation of the input–output behaviour of the original model~\cite{audoly2001global}. Since solutions of ordinary differential equations are uniquely determined by their governing equations and initial conditions, the parameter combinations appearing in the output system uniquely determine the observed outputs $\mathbf{y}$, and hence correspond precisely to the globally structurally identifiable quantities.


\section{Local structural identifiability and parameter symmetries}

To establish the missing link between structural identifiability analyses and Lie symmetries, we first introduce the notion of parameter symmetries. Using this notion, we show that the \emph{universal parameter invariants} of these parameter symmetries correspond to the \emph{locally structurally identifiable parameter quantities} of a given model. Using this result, we demonstrate that the standard differential algebra approach for analysing global structural identifiability is consistent with the concept of universal parameter invariants associated with parameter symmetries.


\subsection{Parameter symmetries are Lie transformations acting as re-parametrisations that preserve observed outputs}

Structural identifiability is defined by the requirement that changes in parameter values induce changes in the observed outputs. It is therefore natural to study transformations of the model parameters that leave the outputs invariant. Among such transformations, Lie symmetries play a distinguished role: they are infinitesimal transformations that act locally on parameters and states while preserving the model equations. Motivated by this observation, we introduce a particular class of Lie symmetries, termed parameter symmetries.

\medskip
\begin{definition}[Parameter symmetries]
    Let $\Gamma^{\boldsymbol{\theta}}_{\varepsilon}:\mathbb{R}\times\mathbb{R}^{m}\times\mathbb{R}^{p}\mapsto\mathbb{R}\times\mathbb{R}^{m}\times\mathbb{R}^{p}$ be a one-parameter $\mathcal{C}^{\infty}$ diffeomorphism that is restricted to the parameters of the system of output ODEs in Eq.~\eqref{eq:ODE_sys_output} defined by
\begin{equation}
    \Gamma_{\varepsilon}^{\boldsymbol{\theta}}:(t,\mathbf{y},\boldsymbol{\theta})\mapsto\left(t,\mathbf{y},\hat{\boldsymbol{\theta}}(\boldsymbol{\theta};\varepsilon)\right),
    \label{eq:Lie_symmetry_parameter}
\end{equation}
where the target functions $\hat{\boldsymbol{\theta}}$ depend solely on the parameters $\boldsymbol{\theta}$ in addition to the parameter $\varepsilon$. To simplify the notation, we simply write $\hat{\boldsymbol{\theta}}(\varepsilon)$ as opposed to $\hat{\boldsymbol{\theta}}(\boldsymbol{\theta};\varepsilon)$. The independent variable $t$ and the dependent variables $\mathbf{y}$ are invariant under the action of $\Gamma_{\varepsilon}^{\boldsymbol{\theta}}$, implying that for any solution curves the following conservation property holds:
\begin{equation}
  \mathbf{y}(t,\boldsymbol{\theta})=\mathbf{y}(t,\hat{\boldsymbol{\theta}}(\varepsilon))\quad\forall\,t,\varepsilon\in\mathbb{R}.
  \label{eq:invariant_output}
\end{equation}
Moreover, let $X_{\boldsymbol{\theta}}$ be the corresponding infinitesimal generator of the Lie group
\begin{equation}
X_{\boldsymbol{\theta}}=\sum_{\ell=1}^{p}\chi_{\ell}(\boldsymbol{\theta})\partial_{\theta_{\ell}},
    \label{eq:X_parameter_1}
\end{equation}
where the functions $\boldsymbol{\chi}(\boldsymbol{\theta})=\left(\chi_{1}(\boldsymbol{\theta}),\ldots,\chi_{p}(\boldsymbol{\theta})\right)$ are called the \textit{parameter infinitesimals}. Then $\Gamma^{\boldsymbol{\theta}}_{\varepsilon}$ in Eq.~\eqref{eq:Lie_symmetry_parameter} is a \textit{parameter symmetry} of the system of output ODEs in Eq.~\eqref{eq:ODE_sys_output} if its infinitesimal generator $X_{\boldsymbol{\theta}}$ solves the linearised symmetry conditions given by
\begin{equation} \left.X_{\boldsymbol{\theta}}\left(\Delta\left(t,\frac{\mathrm{d}^{N}\mathbf{y}}{\mathrm{d}t^{N}},\frac{\mathrm{d}^{N-1}\mathbf{y}}{\mathrm{d}t^{N-1}},\ldots,\frac{\mathrm{d}\mathbf{y}}{\mathrm{d}t},\mathbf{y},\boldsymbol{\theta}\right)\right)\right|_{\Delta=\mathbf{0}}=\sum_{\ell=1}^{p}\chi_{\ell}(\boldsymbol{\theta})\frac{\partial\Delta}{\partial_{\theta_{\ell}}}=\mathbf{0}.
    \label{eq:lin_sym_parameter}
\end{equation}
    \label{def:parameter}
\end{definition}

Essentially, a parameter symmetry $\Gamma_{\varepsilon}^{\boldsymbol{\theta}}$ is a \emph{re-parametrisation} of the output system in Eq.~\eqref{eq:ODE_sys_output} which preserves the observed outputs $\mathbf{y}$. Since these parameter symmetries are restricted to the parameters of the model, we will use the notation $\Gamma_{\varepsilon}^{\boldsymbol{\theta}}:\boldsymbol{\theta}\mapsto\hat{\boldsymbol{\theta}}(\varepsilon)$ to describe them henceforth. Next, we define the notion of differential invariants of parameter symmetries.

\medskip
\begin{definition}[Differential invariants of parameter symmetries]
  Consider a parameter symmetry $\Gamma^{\boldsymbol{\theta}}_{\varepsilon}$ and its corresponding infinitesimal generator $X_{\boldsymbol{\theta}}$. A non-constant function 
  \begin{equation*}I=I\left(t,\frac{\mathrm{d}^{N}\mathbf{y}}{\mathrm{d}t^{N}},\frac{\mathrm{d}^{N-1}\mathbf{y}}{\mathrm{d}t^{N-1}},\ldots,\frac{\mathrm{d}\mathbf{y}}{\mathrm{d}t},\mathbf{y},\boldsymbol{\theta}\right),
  \end{equation*}
  is called a \textit{differential invariant} of $\Gamma^{\boldsymbol{\theta}}_{\varepsilon}$ if it satisfies
  \begin{equation}
X_{\boldsymbol{\theta}}\left(I\right)=\sum_{\ell=1}^{p}\chi_{\ell}(\boldsymbol{\theta})\frac{\partial I}{\partial_{\theta_{\ell}}}=0.
    \label{eq:invariant}
   \end{equation}
  \label{def:inv}
\end{definition}

From this definition, we immediately see that the independent time variable $t$, all outputs $\mathbf{y}$, and their respective derivatives, are themselves differential invariants. In addition to the independent and dependent variables, there are other invariants solely depending on the parameters, and to distinguish between these two types of invariants we introduce the notion of \textit{parameter invariants}.

\medskip
\begin{definition}[Parameter invariants of parameter symmetries]
  Consider a parameter symmetry $\Gamma^{\boldsymbol{\theta}}_{\varepsilon}$ and its corresponding infinitesimal generator $X_{\boldsymbol{\theta}}$. A non-constant function $I_{\boldsymbol{\theta}}=I_{\boldsymbol{\theta}}\left(\boldsymbol{\theta}\right)$ is a called a \textit{parameter invariant} of $\Gamma^{\boldsymbol{\theta}}_{\varepsilon}$ if it solves
  \begin{equation}X_{\boldsymbol{\theta}}\left(I_{\boldsymbol{\theta}}\right)=\sum_{\ell=1}^{p}\chi_{\ell}(\boldsymbol{\theta})\frac{\partial I_{\boldsymbol{\theta}}}{\partial_{\theta_{\ell}}}=0.
    \label{eq:parameter_invariant}
   \end{equation}
  \label{def:param_inv}  
\end{definition}

Moreover, for a general output system of ODEs there are potentially many possible re-parametrisations, or, differently put, many possible parameter symmetries $\Gamma_{\varepsilon}^{\boldsymbol{\theta}}$ (Defn.~\ref{def:parameter}). For instance, consider the trivial parameter symmetry $\Gamma_{\varepsilon}^{\boldsymbol{\theta},0}:\boldsymbol{\theta}\mapsto\boldsymbol{\theta}$ which is common to \emph{all} possible output systems. For the trivial parameter symmetry $\Gamma_{\varepsilon}^{\boldsymbol{\theta},0}$, every single parameter is a parameter invariant, and this is the only parameter symmetry for which this is true. Thus, for an output system with an additional parameter symmetry $\Gamma_{\varepsilon}^{\boldsymbol{\theta}}$ other than the trivial parameter symmetry, some of the parameter invariants of $\Gamma_{\varepsilon}^{\boldsymbol{\theta},0}$ are shared with $\Gamma_{\varepsilon}^{\boldsymbol{\theta}}$ while other parameter invariants of the trivial parameter symmetry are unique to $\Gamma_{\varepsilon}^{\boldsymbol{\theta},0}$. In general, parameter symmetries have some parameter invariants that are unique to them, and other parameter invariants that are shared with all other parameter symmetries. We refer to the latter type of parameter invariant as \textit{universal parameter invariants}.

\medskip
\begin{definition}[Universal parameter invariants of a model]
A non-constant function $I_{\boldsymbol{\theta}}=I_{\boldsymbol{\theta}}\left(\boldsymbol{\theta}\right)$ that is a parameter invariant of all parameter symmetries of the system of output ODEs in Eq.~\eqref{eq:ODE_sys_output} is called a \textit{universal parameter invariant}.
  \label{def:universal}  
\end{definition}

\medskip
Importantly, the independent variable $t$, the outputs $\mathbf{y}$, and all derivatives of the outputs, are universal invariants of all parameter symmetries.

Given the system of output ODEs in Eq.~\eqref{eq:ODE_sys_output}, we know that the number of parameter invariants for any parameter symmetry $\Gamma_{\varepsilon}^{\boldsymbol{\theta}}$ is an integer in the set $\{0,\ldots,p\}$ where $p$ is the number of parameters. When we have zero parameter invariants, there are no parameters in the output ODEs, and in the case of $p$ parameter invariants, then each parameter is itself an invariant---which is only true for the trivial parameter symmetry $\Gamma_{\varepsilon}^{\boldsymbol{\theta},0}:\boldsymbol{\theta}\mapsto\boldsymbol{\theta}$. From now on, we restrict ourselves to output ODEs in the form of Eq.~\eqref{eq:ODE_sys_output} that contain parameters, i.e.~we exclude the extreme case where we have $p=0$ parameters. In particular, if the parameter $\theta_{\ell}$ for some $\ell\in\{1,\ldots,p\}$ is a parameter invariant, i.e.~$I_{\boldsymbol{\theta}}(\boldsymbol{\theta})=\theta_{\ell}$, then Eq.~\eqref{eq:parameter_invariant} gives
\begin{equation}
  X_{\boldsymbol{\theta}}(\theta_{\ell})=\chi_{\ell}(\boldsymbol{\theta})=0.
\label{eq:individual_parameter_invariant}
\end{equation}
Consequently, \textit{a parameter $\theta_{\ell}$ is invariant if its infinitesimal is zero, i.e. $\chi_{\ell}=0$}. Moreover, by generating the corresponding transformation $\hat{\theta}_{\ell}(\varepsilon)$ using the infinitesimal $\chi_{\ell}$, which entails solving the following ODE:
\begin{equation}
   \frac{\mathrm{d}\hat{\theta}_{\ell}}{\mathrm{d}\varepsilon}=\chi_{\ell}\left(\hat{\boldsymbol{\theta}}(\varepsilon)\right)=0,\quad{\hat{\theta}_{\ell}(\varepsilon=0)=\theta_{\ell}},
  \label{eq:param_trans_eq}
\end{equation}
we obtain that the parameter transformation $\hat{\theta}_{\ell}(\varepsilon)$ which leaves the parameter $\theta_{\ell}$ invariant is given by
\begin{equation}
  \hat{\theta}_{\ell}(\varepsilon)=\theta_{\ell}\quad\forall\varepsilon\in\mathbb{R}.
  \label{eq:param_trans}
\end{equation}
In other words, whenever a parameter $\theta_{\ell}$ is a parameter invariant, then it is characterised by Eqs. \eqref{eq:individual_parameter_invariant} and \eqref{eq:param_trans}, implying that it is conserved under transformations by the parameter symmetry $\Gamma_{\varepsilon}^{\boldsymbol{\theta}}$.

Given the notion of parameter invariants, we proceed by formulating the notion of local structural identifiability in terms of parameter symmetries. 


\subsection{Local structural identifiability defined in terms of universal parameter invariants}

One of the key properties of pointwise Lie symmetries of one parameter, in general, and parameter symmetries, in particular, is that these transformations are defined by single-variable, real-valued analytic functions. Based on this property, we now present our main result expressing structural identifiability in terms of universal parameter invariants.

\begin{theo}[Local structural identifiability of individual parameters in terms of universal parameter invariants]
 Let $\mathbf{y}(t,\boldsymbol{\theta})$ be a solution of the system of output ODEs in Eq.~\eqref{eq:ODE_sys_output}. A parameter $\theta_{\ell}\in\boldsymbol{\theta}$, ${\ell}\in\{1,\ldots,p\}$ is locally structurally identifiable if and only if it is a universal parameter invariant.
\label{thm:structural identifiability_symmetries}
\end{theo}
\begin{proof}
  ``$\Longrightarrow$'' We need to prove that a locally structurally identifiable parameter $\theta_{\ell}$ is also a universal parameter invariant. To this end, take a radius $\rho>0$ and a centre $\boldsymbol{\theta}_{C}\in\mathbb{R}^{p}$ defining a ball $\mathcal{B}_{\rho}\left(\boldsymbol{\theta}_{C}\right)$ in which the parameters are identifiable, which means that the following holds:
  \begin{equation}
  \mathbf{y}(t,\boldsymbol{\theta})=\mathbf{y}(t,\boldsymbol{\theta}^{\star})\quad\forall{t}\in\mathbb{R}\implies{\theta}_{\ell}=\theta_{\ell}^{\star}\quad\quad\forall\boldsymbol{\theta},\boldsymbol{\theta}^{\star}\in\mathcal{B}_{\rho}\left(\boldsymbol{\theta}_{C}\right)\,.
      \label{eq:proof_eq_1}
  \end{equation}
  Take a parameter symmetry $\Gamma_{\varepsilon}^{\boldsymbol{\theta}}$. Since this parameter symmetry acts infinitesimally and pointwise, starting from any parameter $\boldsymbol{\theta}\in\mathcal{B}_{\rho}(\boldsymbol{\theta}_{C})$ we can always find transformation parameters $\varepsilon_{1},\varepsilon_{2}\in\mathbb{R}$ where $\varepsilon_{1}<\varepsilon_{2}$ such that these parameters form an interval $[\varepsilon_{1},\varepsilon_{2}]$ for which the corresponding transformed parameters $\hat{\boldsymbol{\theta}}(\varepsilon)$ lie in the ball where the parameters are locally identifiable, which means that $\hat{\boldsymbol{\theta}}(\varepsilon)\in\mathcal{B}_{\rho}\left(\boldsymbol{\theta}_{C}\right)$ for all $\varepsilon\in[\varepsilon_{1},\varepsilon_{2}]$. Also, the independent variable (i.e. time, $t$) and the dependent variables (i.e. the outputs, $\mathbf{y}$) are invariant under transformations by the parameter symmetry $\Gamma_{\varepsilon}^{\boldsymbol{\theta}}$, and thus the following conservation property holds as well: $\mathbf{y}(t,\boldsymbol{\theta})=\mathbf{y}(t,\hat{\boldsymbol{\theta}}(\varepsilon))$ for all ${t,\varepsilon}\in\mathbb{R}$.
  Consequently, it follows from Eq.~\eqref{eq:proof_eq_1} that
  $$ \mathbf{y}(t,\boldsymbol{\theta})=\mathbf{y}(t,\hat{\boldsymbol{\theta}}(\varepsilon))\quad\forall{t}\in\mathbb{R}\implies{\theta}_{\ell}=\hat{\theta}_{\ell}(\varepsilon)\quad\quad\forall\varepsilon\in[\varepsilon_{1},\varepsilon_{2}]\,,$$
  whenever $\theta_{\ell}$ is locally structurally identifiable. This means that $\hat{\theta}_{\ell}(\varepsilon)$ is constant in the subdomain $[\varepsilon_{1},\varepsilon_{2}]$. Since $\hat{\theta}_{\ell}(\varepsilon)$ is a single variable real-valued analytic function, it must therefore be constant throughout its entire domain, which means that $\hat{\theta}_{\ell}(\varepsilon)={\theta}_{\ell}\,\,\forall\varepsilon\in\mathbb{R}$, and thus $\theta_{\ell}$ is a parameter invariant of the parameter symmetry $\Gamma_{\varepsilon}^{\boldsymbol{\theta}}$. The same arguments hold for all parameter symmetries, and thus $\theta_{\ell}$ is a universal parameter invariant. \medskip \\
  ``$\Longleftarrow$'' We need to prove that if $\theta_{\ell}$ is a universal parameter invariant, then it is also locally structurally identifiable. For a universal parameter invariant, we have that $\hat{\theta}_{\ell}(\varepsilon)=\theta_{\ell}\,\,\forall\varepsilon\in\mathbb{R}$ holds for all parameter symmetries. Since all parameter symmetries leave both the independent and dependent variables invariant, the conservation property $\mathbf{y}(t,\boldsymbol{\theta})=\mathbf{y}(t,\hat{\boldsymbol{\theta}}(\varepsilon))$ for all ${t,\varepsilon}\in\mathbb{R}$ holds as well for all parameter symmetries. Together, these two properties yield that
  \begin{equation}
  \mathbf{y}(t,\boldsymbol{\theta})=\mathbf{y}(t,\hat{\boldsymbol{\theta}}(\varepsilon))\quad\forall{t}\in\mathbb{R}\quad\implies\quad\theta_{\ell}=\hat{\theta}_{\ell}(\varepsilon)\quad\forall\varepsilon\in\mathbb{R},
  \label{eq:proof_eq_2}
  \end{equation}
  holds for all parameter symmetries whenever $\theta_{\ell}$ is a universal parameter invariant. Take a radius $\rho>0$ and form the ball $\mathcal{B}_{\rho}\left(\boldsymbol{\theta}\right)$. We need to prove that there exists a radius $\rho$ such that the parameter symmetries for which the implication in Eq.~\eqref{eq:proof_eq_2} holds also cover the whole ball $\mathcal{B}_{\rho}\left(\boldsymbol{\theta}\right)$. By ``covering the whole ball'', we mean that any $\boldsymbol{\theta}^{\star} \in \mathcal{B}_{\rho}(\boldsymbol{\theta})$ yielding identical outputs for all $t \in \mathbb{R}$ must be related to $\boldsymbol{\theta}$ through a parameter symmetry, i.e., $\boldsymbol{\theta}^{\star} = \hat{\boldsymbol{\theta}}(\varepsilon^{\star})$ for some $\varepsilon^{\star} \in \mathbb{R}$. Infinitesimally, the linearised symmetry conditions in Eq.~\eqref{eq:lin_sym_parameter} state that the only directions in parameter space that preserve the observed outputs are indeed spanned by the set of parameter infinitesimals $\boldsymbol{\chi}=(\chi_{1},\ldots,\chi_{p})$ defining all parameter symmetries of the output system. Because $\rho$ may be chosen arbitrarily small, there exists a neighbourhood $\mathcal{B}_{\rho}(\boldsymbol{\theta})$ that is entirely covered by parameter symmetries, which implies local structural identifiability of $\theta_{\ell}$.
  \end{proof}

We say that a model is locally structurally identifiable if all parameters are locally structurally identifiable. In light of Theo.~\ref{thm:structural identifiability_symmetries}, a model is locally structurally identifiable if all parameters are universal parameter invariants. Given our previous discussion about the number of parameter invariants of parameter symmetries, an equivalent formulation is that \textit{a model is locally structurally identifiable if the only parameter symmetry of its input-output system is the trivial parameter symmetry $\Gamma_{\varepsilon}^{\boldsymbol{\theta},0}:\boldsymbol{\theta}\mapsto\boldsymbol{\theta}$}. 

When a model is locally structurally non-identifiable, it is of interest to find the structurally identifiable parameter groupings or parameter quantities. Assume that the parameter symmetries $\Gamma_{\varepsilon}^{\boldsymbol{\theta}}$ of interest have $\tilde{p}\leq{p}$ universal parameter invariants denoted by $I_{k}$, $k\in\{1,\ldots,\tilde{p}\}$ which are collected in a vector $\vec{I}_{\boldsymbol{\theta}}$. Crucially, we can always re-parametrise outputs $\mathbf{y}(t,\boldsymbol{\theta})$ in terms of these universal parameter invariants giving us $\mathbf{y}(t,\vec{I}_{\boldsymbol{\theta}})$, and then apply Theo.~\ref{thm:structural identifiability_symmetries} on the re-parametrised outputs to give:

\medskip
\begin{cor}[Local structural identifiability of parameter combinations in terms of universal parameter invariants]
The \textit{locally structurally identifiable} parameter quantities of the output ODE system in Eq.~\eqref{eq:ODE_sys_output} are given by its universal parameter invariants.
\label{cor:structural_identifiability_symmetries}
\end{cor}

\medskip
The result in Cor. \ref{cor:structural_identifiability_symmetries} highlights an important difference between the standard differential algebra approach for determining structural identifiability and the approach based on parameter symmetries. The former approach finds the globally identifiable parameter quantities whereas the latter finds locally structurally identifiable parameter quantities. Moreover, the globally structurally identifiable parameter quantities can always be written as a function of the locally structurally identifiable parameter quantities and, based on Cor. \ref{cor:structural_identifiability_symmetries}, this means that any globally structurally identifiable parameter quantity can indeed be written as a function of the universal parameter invariants. Subsequently, we show that these conclusions are also true in the general case. We begin by providing an explanation for why the standard differential algebra approach always finds universal parameter invariants.

\subsection{The standard differential algebra approach finds universal parameter invariants}

We demonstrate that the the standard differential algebra approach for analysing structural identifiability will always find universal parameter invariants (Theo. \ref{thm:structural identifiability_symmetries} and Cor. \ref{cor:structural_identifiability_symmetries}). To this end, consider a system of output ODEs in Eq.~\eqref{eq:ODE_sys_output} where the function $\Delta$ is a vector-valued multivariate polynomial for which the monomials are composed of the outputs $\mathbf{y}$ and derivatives of the outputs. In this case, the standard differential algebra approach is conducted in two steps where we first collect all coefficients of the monomials, and then simplify these coefficients algebraically. This procedure yields all identifiable parameter quantities, and here we show that these identifiable parameter quantities are always given by universal parameter invariants according to our definition of local structural identifiability (Cor.~\ref{cor:structural_identifiability_symmetries}). This fact is a consequence of two fundamental properties of differential invariants of Lie symmetries.

One of the most fundamental properties of invariants is that \textit{any function of differential invariants is itself a differential invariant}, and this is well-known within the field of classical symmetries~\cite{bluman1989symmetries}. Of course, the same property also holds for parameter symmetries, and here we present this result as a proposition for the sake of completeness.  

\medskip
\begin{prop}[Functions of differential invariants are themselves differential invariants]
Let $X_{\boldsymbol{\theta}}$ generate a parameter symmetry with $p$ parameters and denote its functionally independent parameter invariants by $I_{1},\ldots,I_{\tilde{p}}$ where the number of invariants is an integer $\tilde{p}\in\{2,\ldots,p\}$. Then any differentiable function $F\left(I_{1},\ldots,I_{\tilde{p}}\right)$ of these invariants is itself a differential invariant.
\label{prop:inv}
\end{prop}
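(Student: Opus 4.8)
The plan is to verify directly from the definition of a parameter invariant (Defn.~\ref{def:param_inv}) that $F(I_1,\ldots,I_{\tilde p})$ annihilates the generator $X_{\boldsymbol{\theta}}$, using only the chain rule. First I would recall that $X_{\boldsymbol{\theta}} = \sum_{\ell=1}^{p}\chi_{\ell}(\boldsymbol{\theta})\,\partial_{\theta_{\ell}}$ acts as a first-order differential operator (a derivation), so that for any differentiable composite function it obeys the chain rule in the form $X_{\boldsymbol{\theta}}\bigl(F(I_1,\ldots,I_{\tilde p})\bigr) = \sum_{k=1}^{\tilde p}\frac{\partial F}{\partial I_k}\,X_{\boldsymbol{\theta}}(I_k)$.

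Next I would substitute the invariance hypothesis. By assumption each $I_k$ is a parameter invariant of $\Gamma^{\boldsymbol{\theta}}_{\varepsilon}$, so $X_{\boldsymbol{\theta}}(I_k) = \sum_{\ell=1}^{p}\chi_{\ell}(\boldsymbol{\theta})\frac{\partial I_k}{\partial_{\theta_{\ell}}} = 0$ for every $k\in\{1,\ldots,\tilde p\}$ by Eq.~\eqref{eq:parameter_invariant}. Plugging this into the chain-rule expansion gives $X_{\boldsymbol{\theta}}\bigl(F(I_1,\ldots,I_{\tilde p})\bigr) = \sum_{k=1}^{\tilde p}\frac{\partial F}{\partial I_k}\cdot 0 = 0$, which is precisely Eq.~\eqref{eq:parameter_invariant} with $I_{\boldsymbol{\theta}} = F(I_1,\ldots,I_{\tilde p})$. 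Hence $F(I_1,\ldots,I_{\tilde p})$ satisfies the defining PDE of a parameter invariant.

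The one genuine subtlety — and the step I expect to require the most care rather than the most computation — is the non-constancy clause in Defn.~\ref{def:param_inv}: a parameter invariant must be a non-constant function. The chain-rule argument shows $F(I_1,\ldots,I_{\tilde p})$ is annihilated by $X_{\boldsymbol{\theta}}$, but if $F$ is chosen so that $F(I_1,\ldots,I_{\tilde p})$ collapses to a constant (for instance $F\equiv c$, or a relation among the $I_k$ that trivialises), then it is not a parameter invariant in the strict sense. I would handle this by stating the proposition for $F$ such that the composite $F(I_1,\ldots,I_{\tilde p})$ is non-constant as a function of $\boldsymbol{\theta}$ — or, equivalently, remarking that the conclusion "is itself a differential invariant" is to be read modulo the harmless exclusion of constants, exactly as in the classical theory~\cite{bluman1989symmetries}. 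With that caveat noted, the proof is the two-line chain-rule computation above, and no deeper machinery (method of characteristics, prolongations, the structure of $\Delta$) is needed.
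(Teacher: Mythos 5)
Your proposal is correct and follows essentially the same route as the paper: expand $X_{\boldsymbol{\theta}}\bigl(F(I_{1},\ldots,I_{\tilde{p}})\bigr)$ via the chain rule, interchange the roles of the sums so each term contains $X_{\boldsymbol{\theta}}(I_{j})=0$, and conclude the composite is annihilated by the generator. Your additional remark about excluding the degenerate case where $F(I_{1},\ldots,I_{\tilde{p}})$ is constant is a sensible refinement of the non-constancy clause in Defn.~\ref{def:param_inv}, but it does not change the argument, which matches the paper's proof of Prop.~\ref{prop:inv}.
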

\begin{proof}
By definition, we have that $X_{\boldsymbol{\theta}}(I_{j})=0$ $\forall j\in\left\{1,\ldots,\tilde{p}\right\}$ and we need to show that $X_{\boldsymbol{\theta}}\left(F\left(I_{1},\ldots,I_{\tilde{p}}\right)\right)=0$. By the chain rule, it follows that
\begin{equation}
\frac{\partial F}{\partial\theta_{\ell}}=\frac{\partial}{\partial\theta_{\ell}}\left(F\left(I_{1},\ldots,I_{\tilde{p}}\right)\right)=\sum_{j=1}^{\tilde{p}}\frac{\partial I_{j}}{\partial\theta_{\ell}}\frac{\partial F}{\partial I_{j}},\quad\ell\in\left\{1,\ldots,p\right\}.
\end{equation}
Thus, we have
\begin{align}
X_{\boldsymbol{\theta}}\left(F\left(I_{1},\ldots,I_{\tilde{p}}\right)\right)&=\sum_{\ell=1}^{p}\chi_{\ell}\frac{\partial F}{\partial\theta_{\ell}}\nonumber\\
&=\sum_{\ell=1}^{p}\chi_{\ell}\sum_{j=1}^{\tilde{p}}\frac{\partial I_{j}}{\partial\theta_{\ell}}\frac{\partial F}{\partial I_{j}}\nonumber\\
&=\sum_{\ell=1}^{p}\sum_{j=1}^{\tilde{p}}\chi_{\ell}\frac{\partial I_{j}}{\partial\theta_{\ell}}\frac{\partial F}{\partial I_{j}}\nonumber\\
    &=\sum_{j=1}^{\tilde{p}}\frac{\partial F}{\partial I_{j}}\underset{=X_{\boldsymbol{\theta}}(I_{j})}{\underbrace{\left(\sum_{\ell=1}^{p}\chi_{\ell}\frac{\partial I_{j}}{\partial\theta_{\ell}}\right)}}\nonumber\\
    &=\sum_{j=1}^{\tilde{p}}\frac{\partial F}{\partial I_{j}}\underset{=0}{\underbrace{X_{\boldsymbol{\theta}}(I_{j})}}=0,
\end{align}
which is the desired result.
\end{proof}

As a short detour, we clarify the difference between parameter invariants (Defn.~\ref{def:param_inv}) and universal parameter invariants (Defn.~\ref{def:universal}) using this fundamental property of differential invariants. Previously, we saw that the trivial parameter symmetry $\Gamma_{\varepsilon}^{\boldsymbol{\theta},0}:\boldsymbol{\theta}\mapsto\boldsymbol{\theta}$ is always a parameter symmetry of all possible models, and, more importantly, all parameters $\boldsymbol{\theta}$ are parameter invariants of $\Gamma_{\varepsilon}^{\boldsymbol{\theta},0}$. Now, take another parameter symmetry $\Gamma_{\varepsilon}^{\boldsymbol{\theta}}$ that has a parameter invariant $I=F(\boldsymbol{\theta})$ defined by some non-constant, non-linear, multivariate and arbitrary function $G$. Moreover, let us assume that $I=G(\boldsymbol{\theta})$ is a universal parameter invariant. Then, the parameter invariant $I=G(\boldsymbol{\theta})$ of $\Gamma_{\varepsilon}^{\boldsymbol{\theta}}$ is also a parameter invariant of the trivial parameter symmetry $\Gamma_{\varepsilon}^{\boldsymbol{\theta},0}$ since $I$ is a function of the parameter invariants $\boldsymbol{\theta}$ of $\Gamma_{\varepsilon}^{\boldsymbol{\theta},0}$ (Prop.~\ref{prop:inv}). For the same reason, all parameter invariants of $\Gamma_{\varepsilon}^{\boldsymbol{\theta}}$ are necessarily parameter invariants of $\Gamma_{\varepsilon}^{\boldsymbol{\theta},0}$. Nevertheless, the converse statement is not true as many of the parameter invariants of the trivial symmetry $\Gamma_{\varepsilon}^{\boldsymbol{\theta},0}$ are not shared with $\Gamma_{\varepsilon}^{\boldsymbol{\theta}}$. 

Using the fact that any function of invariants is itself an invariant, we draw important conclusions about the structure of the system of output ODEs by analysing the linearised symmetry conditions defining parameter symmetries. By definition, the function $\Delta$ in Eq.~\eqref{eq:ODE_sys_output} defining the output-system of ODEs solves the linearised symmetry conditions in Eq.~\eqref{eq:lin_sym_parameter} defining all parameter symmetries. This implies that the function $\Delta$ is, in fact, a universal invariant. As a consequence of Prop. \ref{prop:inv}, the system of output ODEs can therefore always be written as a function of universal invariants.

\medskip
\begin{prop}[ODEs as functions of universal invariants]
Consider the system of output ODEs defined by a function $\Delta$ as in Eq.~\eqref{eq:ODE_sys_output}. Assume that this system of output ODEs has $\tilde{p}\in\{1,\ldots,p\}$ functionally independent universal parameter invariants, and denote these by $I_{k},k\in\{1,\ldots,\tilde{p}\}$ which are collected in a vector $\vec{I}_{\boldsymbol{\theta}}$. Then, $\Delta$ is a function $\Phi$ of the universal invariants according to
\begin{equation}
\Delta\left(t,\frac{\mathrm{d}^{N}\mathbf{y}}{\mathrm{d}t^{N}},\frac{\mathrm{d}^{N-1}\mathbf{y}}{\mathrm{d}t^{N-1}},\ldots,\frac{\mathrm{d}\mathbf{y}}{\mathrm{d}t},\mathbf{y},\boldsymbol{\theta}\right)=\Phi\left(t,\frac{\mathrm{d}^{N}\mathbf{y}}{\mathrm{d}t^{N}},\frac{\mathrm{d}^{N-1}\mathbf{y}}{\mathrm{d}t^{N-1}},\ldots,\frac{\mathrm{d}\mathbf{y}}{\mathrm{d}t},\mathbf{y},\vec{I}_{\boldsymbol{\theta}}\right).
\label{eq:invariant_ODE}
\end{equation}
\label{prop:ODE}
\end{prop}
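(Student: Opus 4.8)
The plan is to expand $\Delta$ in the monomials formed from $\mathbf{y}$ and its $t$-derivatives, show that every coefficient in this expansion is a universal parameter invariant, and then use the completeness of $I_{1},\ldots,I_{\tilde{p}}$ as a set of universal parameter invariants to replace each coefficient by a function of $\vec{I}_{\boldsymbol{\theta}}$; re-assembling the expansion then produces $\Phi$.

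Concretely, I would first fix a monomial basis $\{M_{a}\}$ for polynomials in $(\mathbf{y},\dot{\mathbf{y}},\ldots,\mathrm{d}^{N}\mathbf{y}/\mathrm{d}t^{N})$ and write each component of $\Delta$ as $\sum_{a}c_{a}(t,\boldsymbol{\theta})\,M_{a}$, where --- after bringing $\Delta$ into the monic, reduced (characteristic-set) form of Steps 1--2 of Algorithm \ref{alg:differential algebra} --- the leading monomial of every component has coefficient $1$. Take any parameter symmetry $\Gamma_{\varepsilon}^{\boldsymbol{\theta}}$ with generator $X_{\boldsymbol{\theta}}=\sum_{\ell}\chi_{\ell}(\boldsymbol{\theta})\partial_{\theta_{\ell}}$. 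Since $X_{\boldsymbol{\theta}}$ differentiates with respect to the parameters only, leaving $t$, $\mathbf{y}$ and all derivatives of $\mathbf{y}$ fixed (so that its prolongation coincides with $X_{\boldsymbol{\theta}}$ itself), one has $X_{\boldsymbol{\theta}}(\Delta)=\sum_{a}X_{\boldsymbol{\theta}}(c_{a})\,M_{a}$, an expression containing no leading monomial because the constant leading coefficient is annihilated. Hence $X_{\boldsymbol{\theta}}(\Delta)$ has strictly lower rank than $\Delta$ and, by autoreducedness, cannot be a nontrivial combination of the components of $\Delta$; the linearised symmetry condition \eqref{eq:lin_sym_parameter} therefore forces $X_{\boldsymbol{\theta}}(\Delta)=\mathbf{0}$ identically. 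As the monomials $M_{a}$ are linearly independent over functions of $(t,\boldsymbol{\theta})$, this yields $X_{\boldsymbol{\theta}}(c_{a})=0$ for every $a$.

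Because $\Gamma_{\varepsilon}^{\boldsymbol{\theta}}$ was arbitrary, each $c_{a}$ is a parameter invariant of every parameter symmetry, i.e. a universal parameter invariant (or a constant). Now I would invoke the standard fact from the theory of invariants of vector fields~\cite{bluman1989symmetries} that a maximal functionally independent set $I_{1},\ldots,I_{\tilde{p}}$ of common invariants determines all of them: applying the constant-rank theorem to the common level sets of all parameter symmetry generators, any universal parameter invariant is locally a function of $I_{1},\ldots,I_{\tilde{p}}$. Thus $c_{a}=\phi_{a}(\vec{I}_{\boldsymbol{\theta}})$ for suitable functions $\phi_{a}$, and substituting back gives $\Delta=\sum_{a}\phi_{a}(\vec{I}_{\boldsymbol{\theta}})\,M_{a}$, which is precisely the asserted function $\Phi$ of $t$, $\mathrm{d}^{N}\mathbf{y}/\mathrm{d}t^{N},\ldots,\mathbf{y}$ and $\vec{I}_{\boldsymbol{\theta}}$.

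The step needing most care is the passage from the on-shell condition ``$X_{\boldsymbol{\theta}}(\Delta)=\mathbf{0}$ whenever $\Delta=\mathbf{0}$'' to the identity $X_{\boldsymbol{\theta}}(\Delta)=\mathbf{0}$: one must exclude a nonzero characteristic $\Lambda$ with $X_{\boldsymbol{\theta}}(\Delta)=\Lambda\Delta$, which is exactly where the monic/reduced normal form of $\Delta$ and the observation that $X_{\boldsymbol{\theta}}$ lowers the rank of every component are used. The remaining ingredient --- that $I_{1},\ldots,I_{\tilde{p}}$ exhaust the universal parameter invariants up to functional dependence --- is essentially built into the meaning of $\tilde{p}$, so I would state it and cite the standard theory rather than reprove it.
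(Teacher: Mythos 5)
Your proposal is correct, but it takes a genuinely different route from the paper. The paper's proof stays at the level of $\Delta$ as a whole: the linearised symmetry conditions in Eq.~\eqref{eq:lin_sym_parameter} make $\Delta$ a differential invariant of every parameter symmetry, hence a universal differential invariant, and Eq.~\eqref{eq:invariant_ODE} is then read off from Prop.~\ref{prop:inv} together with the (implicit) completeness of the set consisting of $t$, the outputs and their derivatives, and $I_{1},\ldots,I_{\tilde{p}}$. You instead expand $\Delta$ in monomials of $\mathbf{y}$ and its derivatives, show that every coefficient $c_{a}$ is annihilated by each generator $X_{\boldsymbol{\theta}}$, and only then pass to functions of $\vec{I}_{\boldsymbol{\theta}}$ via the constant-rank/completeness theorem. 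Your route buys two things: it proves directly the coefficient-wise statement that the paper only draws afterwards as the explanation of why Algorithm~\ref{alg:differential algebra} returns universal parameter invariants, and it explicitly closes the on-shell-to-identity gap (passing from $X_{\boldsymbol{\theta}}(\Delta)=\mathbf{0}$ on $\Delta=\mathbf{0}$ to the identity $X_{\boldsymbol{\theta}}(\Delta)=\mathbf{0}$) via the monic, autoreduced form of $\Delta$ --- a point the paper glosses over by simply declaring $\Delta$ a differential invariant, and which could equally be handled by linear independence of the monomials along a generic solution, as the paper does implicitly in its worked examples. The price is that you lean on the characteristic-set normal form from Steps 1--2 of Algorithm~\ref{alg:differential algebra} and on the polynomial structure of $\Delta$, whereas the paper's two-line argument needs neither. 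Both proofs ultimately rely on the same completeness assumption about $I_{1},\ldots,I_{\tilde{p}}$, which you state and source more carefully than the paper, whose citation of Prop.~\ref{prop:inv} at that point is really the converse of what is needed.
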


Armed with Prop.~\ref{prop:ODE}, we understand why the standard differential algebra approach for conducting a global structural identifiability analysis finds identifiable parameter quantities from a symmetry perspective. Again, consider the case discussed previously when the system of output ODEs in Eq.~\eqref{eq:ODE_sys_output} is defined by a function $\Delta$ that is a vector-valued multivariate polynomial. The standard differential algebra approach for elucidating structural identifiability extracts coefficients of the monomials, and by virtue of Eq.~\eqref{eq:invariant_ODE} these coefficients must either be a constant or a universal parameter invariant. Better still, our notion of structural identifiability in terms of universal parameter invariants (Theo.~\ref{thm:structural identifiability_symmetries} and Cor.~\ref{cor:structural_identifiability_symmetries}) does not only yield locally identifiable parameter quantities, but it also allows us to characterise the parameter transformations preserving the observed outputs in the form of parameter symmetries. 

Thus far, we have introduced a theoretical framework for analysing local structural identifiability through parameter symmetries. We now turn to concrete examples that demonstrate how this parameter-symmetry-based approach can be implemented in practice to assess local structural identifiability.

\section{Analysing local structural identifiability using parameter symmetries}

Subsequently, we use the toy example presented in Section 1, see Eqs.~\eqref{eq:toy_individual} and~\eqref{eq:toy_intro}, to illustrate that calculating universal parameter invariants provides information that is identical with the findings of the standard differential algebra approach when the globally and locally structurally identifiable parameter combinations are the same. Thereafter, we demonstrate through the use of a second example that the conclusions of the approach based on calculating universal parameter invariants and the standard differential algebra approach are different when the globally and locally structurally identifiable parameter combinations are different. We proceed by generalising the approach for analysing local structural identifiability using parameter symmetries implemented in these initial two examples, by presenting a three-step procedure called the CaLinInv recipe. Using this three-step procedure, we analyse the local structural identifiability of two larger models.

\subsection{Analysing the structural identifiability of a toy model using parameter symmetries}

Consider the toy example in Section~1 that results in the output system in Eq.~\eqref{eq:toy_intro} given by $\mathrm{d}y/\mathrm{d}t=(\kappa_{1}+\kappa_{2})-\lambda{y}$. The standard differential algebra approach concluded that the parameter $\lambda$ is globally structurally identifiable and that the parameters $\kappa_{1}$ and $\kappa_{2}$ are individually non-identifiable while their sum $\kappa_{1}+\kappa_{2}$ is globally structurally identifiable.

As a comparison, we next consider the symmetry-based approach for elucidating structural identifiability which entails finding universal parameter invariants of the output ODE in Eq.~\eqref{eq:toy_intro}. To this end, we look for parameter symmetries $\Gamma_{\varepsilon}^{\boldsymbol{\theta}}$ of the output ODE $\mathrm{d}y/\mathrm{d}t=(\kappa_{1}+\kappa_{2})-\lambda{y}$ with the following structure
\begin{equation}
    \Gamma_{\varepsilon}^{\boldsymbol{\theta}}:(\kappa_{1},\kappa_{2},\lambda)\mapsto(\hat{\kappa}_{1}(\kappa_{1},\kappa_{2},\lambda,\varepsilon),\hat{\kappa}_{2}(\kappa_{1},\kappa_{2},\lambda,\varepsilon),\hat{\lambda}(\kappa_{1},\kappa_{2},\lambda,\varepsilon)).
    \label{eq:parameter_symmetry}
\end{equation}
We denote the generating vector field of the parameter symmetry $\Gamma_{\varepsilon}^{\boldsymbol{\theta}}$ in Eq.~\eqref{eq:parameter_symmetry} by
\begin{equation}
    X_{\boldsymbol{\theta}}=\chi_{\kappa_{1}}(\kappa_{1},\kappa_{2},\lambda)\partial_{\kappa_{1}}+\chi_{\kappa_{2}}(\kappa_{1},\kappa_{2},\lambda)\partial_{\kappa_{2}}+\chi_{\lambda}(\kappa_{1},\kappa_{2},\lambda)\partial_{\lambda}.
    \label{eq:X_theta}
\end{equation}
Given this vector field, we consider the following linearised symmetry condition:
\begin{equation}
    X_{\boldsymbol{\theta}}\left[  \dfrac{\mathrm{d}y}{\mathrm{d}t}-((\kappa_{1}+\kappa_{2})-\lambda{y})\right]=0\quad\textrm{whenever}\quad  \dfrac{\mathrm{d}y}{\mathrm{d}t}-((\kappa_{1}+\kappa_{2})-\lambda{y})=0,
    \label{eq:lin_sym_toy_1}
\end{equation}
which states that the solution manifold is invariant under transformations by the parameter symmetry $\Gamma_{\varepsilon}^{\boldsymbol{\theta}}$ in Eq.~\eqref{eq:parameter_symmetry}. Carrying out the differentiation on the left-hand side yields the following equivalent equation:
\begin{equation}(\chi_{\kappa_{1}}+\chi_{\kappa_{2}})+y\chi_{\lambda}=0.
    \label{eq:lin_sym_toy_2}
\end{equation}
Moreover, since the monomials $\{1,y\}$ are linearly independent the above equation implies that the following two equations must hold simultaneously,
\begin{equation}
    \chi_{\kappa_{1}}=-\chi_{\kappa_{2}},\quad\chi_{\lambda}=0,
    \label{eq:lin_sym_toy_3}
\end{equation}
and thus the family of generating vector fields is given by
\begin{equation}
    X_{\boldsymbol{\theta}}=\chi_{\kappa_{1}}(\kappa_{1},\kappa_{2},\lambda)(\partial_{\kappa_{1}}-\partial_{\kappa_{2}}),
    \label{eq:X_theta_2}
\end{equation}
for some arbitrary function $\chi_{\kappa_{1}}$ of the parameters. Next, we look for parameter invariants $I(\kappa_{1},\kappa_{2},\lambda)$ satisfying 
\begin{equation}
    X_{\boldsymbol{\theta}}(I(\kappa_{1},\kappa_{2},\lambda))=\chi_{\kappa_{1}}(\kappa_{1},\kappa_{2},\lambda)\left(\frac{\partial I}{\partial\kappa_{1}}-\frac{\partial I}{\partial\kappa_{2}}\right)=0.
    \label{eq:inv_para}
\end{equation}
To find differential invariants, we apply the method of characteristics to Eq.~\eqref{eq:inv_para}. Specifically, we look for a parametrised solution curve $I(s)=I(\kappa_{1}(s),\kappa_{2}(s),\lambda(s))$ where $s$ is an arbitrary parameter. By the chain rule, it follows that
\begin{equation}
    \frac{\mathrm{d}I}{\mathrm{d}s}=\frac{\mathrm{d}\kappa_{1}}{\mathrm{d}s}\frac{\partial I}{\partial\kappa_{1}}+\frac{\mathrm{d}\kappa_{2}}{\mathrm{d}s}\frac{\partial I}{\partial\kappa_{2}}+\frac{\mathrm{d}\lambda}{\mathrm{d}s}\frac{\partial I}{\partial \lambda}.
    \label{eq:chain}    
\end{equation}
By comparing Eqs.~\eqref{eq:inv_para} and~\eqref{eq:chain}, we obtain the following characteristic equations
\begin{align}
    \frac{\mathrm{d}I}{\mathrm{d}s}&=0,\label{eq:chara_1}\\
    \frac{\mathrm{d}\lambda}{\mathrm{d}s}&=0,\label{eq:chara_2}\\
    \frac{\mathrm{d}\kappa_{1}}{\mathrm{d}s}&=\chi_{\kappa_{1}},\label{eq:chara_3}\\
    \frac{\mathrm{d}\kappa_{2}}{\mathrm{d}s}&=-\chi_{\kappa_{1}}.\label{eq:chara_4}
\end{align}
By Eq.~\eqref{eq:chara_1}, any differential invariant is an arbitrary integration constant or a first integral, i.e. $I=\mathrm{constant}$. By Eq.~\eqref{eq:chara_2} it follows that the first differential invariant is given by $I_{1}=\lambda$. By combining Eqs.~\eqref{eq:chara_3} and~\eqref{eq:chara_4} under the assumption that $\chi_{\kappa_{1}}\neq 0$, we obtain 
\begin{equation}
    \frac{\mathrm{d}\kappa_{1}}{\mathrm{d}\kappa_{2}}=-1,
    \label{eq:temp_ODE}
\end{equation}
which is readily integrated to
\begin{equation}
    I_{2}=\kappa_{1}+\kappa_{2},
    \label{eq:temp_solution}
\end{equation}
where $I_{2}$ is an arbitrary integration constant. Since $I_{2}$ is a first integral of Eq.~\eqref{eq:temp_ODE}, this implies that it is also another differential invariant. In total, this implies that the two universal parameter invariants of the parameter symmetry $\Gamma_{\varepsilon}^{\boldsymbol{\theta}}$ in Eq.~\eqref{eq:parameter_symmetry} are given by
\begin{equation}
    I_{1}=\lambda,\quad I_{2}=\kappa_{1}+\kappa_{2},
\label{eq:differential_invariants_toy}
\end{equation}
which agrees with the conclusions of the standard differential algebra approach. Better still, we clearly see how the parameter symmetries in Eq.~\eqref{eq:parameter_symmetry} act on the parameters of the model. For instance, the parameter symmetry defined by $\chi_{\kappa_{1}}=1$ corresponds to translations with opposite signs of the parameters $\kappa_{1}$ and $\kappa_{2}$, respectively, according to
\begin{equation}
    \Gamma_{\varepsilon}^{\boldsymbol{\theta}}:(\kappa_{1},\kappa_{2},\lambda)\mapsto(\kappa_{1}+\varepsilon,\kappa_{2}-\varepsilon,\lambda),
    \label{eq:parameter_symmetry_toy}
\end{equation}
and clearly this symmetry preserves the universal parameter invariants in Eq.~\eqref{eq:differential_invariants_toy} since
\begin{equation}
\hat{\kappa}_{1}(\kappa_{1},\kappa_{2},\lambda,\varepsilon)+\hat{\kappa}_{2}(\kappa_{1},\kappa_{2},\lambda,\varepsilon)=\kappa_{1}+\kappa_{2}=I_{2}\quad\forall\varepsilon\in\mathbb{R}.
  \label{eq:toy_preserve_invariants}
\end{equation}
This toy example illustrates that the standard differential algebra approach and the approach based on parameter symmetries arrive at the same conclusions regarding the identifiable parameter quantities \textit{in the case where the locally and globally structurally identifiable parameter quantities are the same}. Additionally, the symmetry-based approach also yields the family of parameter symmetries corresponding to the parameter transformations that preserve the observed outputs. 

Next, we analyse the structural identifiability of a linear model, for which the locally and globally identifiable parameters are different, by means of the standard differential algebra approach and the approach based on parameter symmetries.


\subsection{Analysing structural identifiability of a linear model where the locally and globally identifiable parameters are different}

{
Let $u(t)$ and $v(t)$ denote the concentrations of two chemical species depending on time $t$ which satisfy the coupled system of two linear ODEs
\begin{equation}
  \begin{split}
    \dfrac{\mathrm{d}u}{\mathrm{d}t}&=au+bv\,,\\
    \dfrac{\mathrm{d}v}{\mathrm{d}t}&=cv\,,\\
  \end{split}
   \label{eq:toy}
 \end{equation}
where $a,b,c$ are three positive rate constants. Furthermore, assume that we observe the first species, i.e.~$y(t)=u(t)$, yielding the following model for the output
 \begin{equation}
   0=\dfrac{\mathrm{d}^{2}y}{\mathrm{d}t^{2}}-(a+c)\dfrac{\mathrm{d}y}{\mathrm{d}t}+(ac)y\,.
   \label{eq:toy_output}
 \end{equation}
 We first apply the standard differential algebra approach for analysing the structural identifiability of this model. First, we see that the parameter $b$ does not appear in the output equation in Eq. \eqref{eq:toy_output}, and hence \textit{$b$ is structurally non-identifiable. Next, we extract the coefficients in front of $\{\mathrm{d}^{2}y/\mathrm{d}t^{2},\mathrm{d}y/\mathrm{d}t,y\}$ resulting in the set $\{1,\mathcal{A},\mathcal{B}\}$ where $\mathcal{A}=a+c$ is the sum and $\mathcal{B}=ac$ is the product of the parameters $a$ and $c$. Clearly, the globally structurally identifiable parameter quantities correspond to the sum $\mathcal{A}$ and the product $\mathcal{B}$, whereas the individual parameters $a$ and $c$ are not globally structurally identifiable. This can be seen by equating $a=\mathcal{A}-c$ with $a=\mathcal{B}/c$ which yields a quadratic equation for the parameter $c$:}
\begin{equation}
  c^{2}-c\mathcal{A}+\mathcal{B}=0\,.
  \label{eq:quadratic_c}
  \end{equation}
  The two solutions of this equation are given by
  \begin{equation}
    c=\frac{1}{2}\left(\mathcal{A}\pm\sqrt{\mathcal{A}^{2}-4\mathcal{B}}\right)\,,
    \label{eq:quadratic_solution}
  \end{equation}  
   and thus the two sets of parameter values that yield the same outputs are given by
   \begin{equation} 
   (a,c)=\left(\dfrac{2\mathcal{B}}{\left(\mathcal{A}\pm\sqrt{\mathcal{A}^{2}-4\mathcal{B}}\right)},\frac{1}{2}\left(\mathcal{A}\pm\sqrt{\mathcal{A}^{2}-4\mathcal{B}}\right)\right)\,.
     \label{eq:global_parameters}
   \end{equation}
   Thus, the parameters $a$ and $c$ are only locally structurally identifiable. Furthermore, since parameter symmetries act on parameters locally in an infinitesimal fashion, it is not surprising that their universal parameter invariants indeed correspond to the locally identifiable parameter quantities. This is clearly illustrated using the symmetry-based method.

We now analyse the structural identifiability of the output system in Eq.~\eqref{eq:toy_output} using parameter symmetries. The linearised symmetry condition is given by
\begin{equation}
 0=-(\chi_{a}+\chi_{c})\dfrac{\mathrm{d}y}{\mathrm{d}t}+(c\chi_{a}+a\chi_{c})y\,.
 \label{eq:lin_sym_toy}
\end{equation}
In order for a parameter to be locally structurally identifiable, we require its infinitesimal to be identically equal to zero. Since the infinitesimal $\chi_{b}$ does not appear in the above equation, it follows that $b$ is non-identifiable. Decomposing Eq.~\eqref{eq:lin_sym_toy} with respect to the linearly independent set $\{\mathrm{d}y/\mathrm{d}t,y\}$ yields
\begin{equation}
 \chi_{a}=-\chi_{c}\,,\quad\chi_{a}=-\left(\dfrac{a}{c}\right)\chi_{c}\,.
 \label{eq:det_eq_toy}
\end{equation}
Equating these two equations yields
$$\left(1-\dfrac{a}{c}\right)\chi_{c}=0\,,$$
and when $a\neq{c}$ it follows that $\chi_{c}=0$ and thus $c$ is locally structurally identifiable. Since $\chi_{a}=-\chi_{c}$, we have that $\chi_{a}=0$ and thus $a$ is also locally structurally identifiable. Thus, when $a\neq{c}$ both $a$ and $c$ are locally structurally identifiable.

When $a=c$ we see that the output ODE in Eq. \eqref{eq:toy_output} becomes $0=\mathrm{d}^{2}y/\mathrm{d}t^{2}-2a(\mathrm{d}y/\mathrm{d}t)+a^{2}y$  and the corresponding linearised symmetry condition becomes $0=-2\chi_{a}(\mathrm{d}y/\mathrm{d}t)+2a\chi_{a}y$. By separating the last linearised symmetry condition with respect to $\{\mathrm{d}y/\mathrm{d}t,y\}$, it decomposes into the single constraint $\chi_{a}=0$ implying that $a$ is locally structurally identifiable. Since $a=c$, $c$ must also be locally structurally identifiable.

In conclusion, this example highlights the key difference between the standard differential algebra approach to structural identifiability and the parameter-symmetry-based approach. The former identifies globally identifiable parameter combinations, whereas the latter reveals locally structurally identifiable parameter quantities. Next, we generalise the parameter-symmetry-based approach for analysing local structural identifiability.

\subsection{A parameter-symmetry-based approach for elucidating local structural identifiability}

We present a recipe in three steps for elucidating local structural identifiability using parameter symmetries. These three steps are captured by the acronym \textit{CaLinInv}; \textit{Ca}nonical coordinates, \textit{Lin}earised symmetry conditions and differential \textit{Inv}ariants (Alg.~\ref{alg:CaLinInv}). Importantly, the CaLinInv recipe is by no means restricted to polynomial systems of output ODEs and thus it works on arbitrary output systems. We want to emphasise that the additional information that is gained when using the CaLinInv recipe over the differential algebra approach is the parameter symmetries or, differently put, the parameter transformations which preserve the observed outputs. In the particular case when $\Delta$ defining the system of ODEs for the outputs in Eq.~\eqref{eq:ODE_sys_output} is composed of multivariate polynomials, the linearised symmetry conditions decompose into a system of linear equations that can be solved using Gaussian elimination. 

\begin{algorithm}[ht!]
\caption{The \textit{CaLinInv}-method for a symmetry-based local structural identifiability-analysis.}
\smallskip
\KwIn{A system of first-order ODEs with associated observed outputs as in Eq.~\eqref{eq:ODE_sys_full}.}
\KwOut{The locally identifiable parameter quantities corresponding to universal parameter invariants and the transformations preserving these parameter quantities in the form of a family of parameter symmetries $\Gamma_{\varepsilon}^{\boldsymbol{\theta}}$.}
\textit{Step 1: \textbf{Ca}nonical coordinates}. The outputs $\mathbf{y}$ are canonical coordinates of parameter symmetries. Re-write the original system of first-order ODEs in Eq.~\eqref{eq:ODE_sys_full} as a system of ODEs as in Eq.~\eqref{eq:ODE_sys_output} solely depending on the observed outputs $\mathbf{y}$ and the parameters $\boldsymbol{\theta}$.\\
\textit{Step 2: \textbf{Lin}earised symmetry conditions}. Find the parameter symmetries $\Gamma_{\varepsilon}^{\boldsymbol{\theta}}$ of the resulting system of output ODEs which are generated by $X_{\boldsymbol{\theta}}=\sum_{\ell=1}^{p}\chi_{\ell}(\boldsymbol{\theta})\partial_{\theta_{\ell}}$. In other words, solve the linearised symmetry conditions for the infinitesimals $\chi_{\ell}$ for $\ell\in\left\{1,\ldots,p\right\}$ and then use $X_{\boldsymbol{\theta}}$ to generate $\Gamma_{\varepsilon}^{\boldsymbol{\theta}}$.\\
\textit{Step 3: Universal \textbf{Inv}ariants}. Find the universal parameter invariants $I=I\left(\boldsymbol{\theta}\right)$ of the parameter symmetries by solving the linear PDE $X_{\boldsymbol{\theta}}(I)=0$ using the method of characteristics.
\medskip
\label{alg:CaLinInv}
\end{algorithm}

In the case where $\Delta$ in Eq.~\eqref{eq:ODE_sys_output} consists of rational functions of the outputs $\mathbf{y}$ and their derivatives, the linearised symmetry conditions 
\begin{equation}
X_{\boldsymbol{\theta}}\left(\Delta\left(t,\frac{\mathrm{d}^{N}\mathbf{y}}{\mathrm{d}t^{N}},\frac{\mathrm{d}^{N-1}\mathbf{y}}{\mathrm{d}t^{N-1}},\ldots,\frac{\mathrm{d}\mathbf{y}}{\mathrm{d}t},\mathbf{y},\boldsymbol{\theta}\right)\right)=\mathbf{0},
        \label{eq:lin_sym_outputs}
\end{equation}
decompose into a linear system of equations of the form
\begin{equation}
        M\boldsymbol{\chi}=\mathbf{0},
\label{eq:mat_system_outputs}
\end{equation}
where $M$ is a $\tilde{n}\times p$-matrix, $\boldsymbol{\chi}$ is a $p\times 1$-vector and $\mathbf{0}$ is the $\tilde{n}\times 1$-zero vector. Here, $p$ is the number of parameters and $\boldsymbol{\chi}=\left(\chi_{1},\chi_{2},\ldots,\chi_{p}\right)$ contains the parameter infinitesimals. 
    \label{remark:matrix}

The first step in the CaLinInv recipe, expressing the original system as an equivalent system solely depending on the observed outputs, is identical to the first step in the standard differential algebra approach. Thereafter, the methodologies differ where the differential algebra approach simply extracts the coefficients of the monomials and then reduces the resulting set of parameter combinations, whereas the CaLinInv recipe solves the linearised symmetry conditions, generates parameter symmetries and calculates universal parameter invariants.  In terms of outcomes, both approaches yield the identifiable parameter quantities corresponding to universal parameter invariants. However, the CaLinInv recipe yields the family of parameter symmetries whereas the standard differential algebra approach merely yields the universal parameter invariants. 


\subsection{Analysing local structural identifiability of a glucose-insulin model with a time-dependent input}

Next, we study a model of glucose-insulin regulation that was originally presented in~\cite{bolie1961coefficients}. Importantly, this model has been subject to structural identifiability analyses using the differential algebra approach~\cite{cobelli1980parameter} as well as a symmetry-based analysis focusing on full symmetries~\cite{massonis2020finding}. Here, we study this model by means of parameter symmetries instead and we characterise the family of parameter symmetries preserving the observed outputs. 


\subsubsection*{Step 1: Canonical coordinates}

In this model, we have two states given by $x_{1}(t)$, glucose concentration, and $x_{2}(t)$, insulin concentration, one known input $u(t)\geq{0}$ corresponding to glucose entering the digestive system, and one output $y(t)$ corresponding to a glucose measurement. In total, there are five parameters
\begin{equation}
\boldsymbol{\theta}=(p_{1},p_{2},p_{3},p_{4},V_{p}),
\label{eq:theta_glucose}
\end{equation}
where the first four encode first-order reaction rates while the last parameter corresponds to the volume of blood extracted during glucose measurement, and the original system of ODEs is given by
\begin{equation}
\begin{split}
    \dfrac{\mathrm{d}x_{1}}{\mathrm{d}t}&=u+p_{1}x_{1}-p_{2}x_{2},\\
    \dfrac{\mathrm{d}x_{2}}{\mathrm{d}t}&=p_{3}x_{2}+p_{4}x_{1},\\
    y&=\frac{x_{1}}{V_{p}}.
\end{split}
\end{equation}
Note that since the input $u(t)$ depends on time the model of interest is non-autonomous, and we assume that the input is nonconstant. Furthermore, we assume that the input and the output are linearly independent, i.e. $u(t)\neq{C}y(t)$ for some $t\in\mathbb{R}$ and some constant $C\in\mathbb{R}$. The ODE for the output is given by
\begin{equation}
V_{p} \dfrac{\mathrm{d}^{2}y}{\mathrm{d}t^{2}} - V_{p}(p_{1} +p_{3}) \dfrac{\mathrm{d}y}{\mathrm{d}t} + V_{p}(p_{1}p_{3}+p_{2}p_{4})y+p_{3}u-\dfrac{\mathrm{d}u}{\mathrm{d}t}=0\,.
    \label{eq:glucose_ODE_output}
\end{equation}
\subsubsection*{Step 2: Linearised symmetry condition}
The linearised symmetry condition is given by
\begin{equation}
\begin{split}
\left[-V_{p}\dfrac{\mathrm{d}y}{\mathrm{d}t}+V_{p}p_{3}y\right]\chi_{p_{1}}+[V_{p}p_{4}y]\chi_{p_{2}}+\left[-V_{p}\dfrac{\mathrm{d}y}{\mathrm{d}t}+V_{p}p_{1}y\right]\chi_{p_{3}}+u\chi_{p_{3}}+\left[V_{p}p_{2}y\right]\chi_{p_{4}}&\\
+\left[\dfrac{\mathrm{d}^{2}y}{\mathrm{d}t^{2}}-(p_{1}+p_{3})\dfrac{\mathrm{d}y}{\mathrm{d}t}+(p_{1}p_{3}+p_{2}p_{4})y\right]\chi_{V_{p}}&=0.\\
\end{split}
    \label{eq:glucose_lin_sym}
\end{equation}
Therefore, the linearly independent set of coefficients is those relating to $\{\mathrm{d}^{2}y/\mathrm{d}t^{2},\mathrm{d}y/\mathrm{d}t,y,u\}$. The coefficient of $\mathrm{d}^{2}y/\mathrm{d}t^{2}$ shows that $\chi_{V_{p}}=0$ and therefore $V_{p}$ is locally structurally identifiable. Moreover, the coefficient of the input $u$ yields that $\chi_{p_{3}}=0$ and therefore $p_{3}$ is also structurally identifiable. By substituting $\chi_{V_{p}}=\chi_{p_{3}}=0$ into Eq.~\eqref{eq:glucose_lin_sym}, we obtain 
\begin{equation}
    \dfrac{\mathrm{d}y}{\mathrm{d}t}\left(-V_{p}\chi_{p_{1}}\right)+y\left(V_{p}p_{3}\chi_{p_{1}}+V_{p}p_{4}\chi_{p_{2}}+V_{p}p_{2}\chi_{p_{4}}\right)=0.
    \label{eq:glucose_lin_sym_2}
\end{equation}
The coefficient of $\mathrm{d}y/\mathrm{d}t$ yields that $\chi_{p_{1}}=0$ and hence $p_{1}$ is locally structurally identifiable. Lastly, the coefficient of $y$ together with the conclusion that $\chi_{p_{1}}=0$ yields 
\begin{equation}
    \chi_{p_{4}}=-\left(\frac{p_{4}}{p_{2}}\right)\chi_{p_{2}},
    \label{eq:glucose_lin_sym_3}
\end{equation}
and hence the generating vector fields of the family of symmetries of the glucose-insulin model are given by
\begin{equation}
    X_{\boldsymbol{\theta}}=\frac{1}{p_{2}}\chi_{p_{2}}\left(p_{1},p_{2},p_{3},p_{4},V_{p}\right)\left[p_{2}\partial_{p_{2}}-p_{4}\partial_{p_{4}}\right],
    \label{eq:X_glucose}
\end{equation}
for arbitrary functions $\chi_{p_{2}}$ of the parameters $\boldsymbol{\theta}$ in Eq.~\eqref{eq:theta_glucose}. These parameter symmetries correspond to scalings of the parameters $p_{2}$ and $p_{4}$. To illustrate this, consider the parameter symmetry $\Gamma_{\varepsilon}^{\boldsymbol{\theta}}$ defined by the arbitrary function $\chi_{p_{2}}=p_{2}$. Substituting $\chi_{p_{2}}=p_{2}$ into the vector field $X_{\boldsymbol{\theta}}$ in Eq.~\eqref{eq:X_glucose} results in $X_{\boldsymbol{\theta}}=p_{2}\partial_{p_{2}}-p_{4}\partial_{p_{4}}$, and generating the corresponding symmetry yields
\begin{equation}\Gamma_{\varepsilon}^{\boldsymbol{\theta}}:(p_{1},p_{2},p_{3},p_{4},V_{p})\mapsto(p_{1},p_{2}\exp(\varepsilon),p_{3},p_{4}\exp(-\varepsilon),V_{p}).
    \label{eq:Gamma_glucose}
\end{equation}
}


\subsubsection*{Step 3: Universal Invariants}

Thus far, we have calculated three universal parameter invariants corresponding to the directly identifiable parameters: $I_{1}=p_{1}$, $I_{2}=p_{3}$ and $I_{3}=V_{p}$. Next, we find the last universal parameter invariant $I_{4}=I_{4}\left(p_{1},p_{2},p_{3},p_{4},V_{p}\right)$ by solving the equation $X_{\boldsymbol{\theta}}(I_{4})=0$. The method of characteristics yields the following characteristic equation for the remaining differential invariant
\begin{equation}
    \frac{\mathrm{d}p_{2}}{\mathrm{d}p_{4}}=-\frac{p_{2}}{p_{4}},
    \label{eq:cara_glucose}
\end{equation}
and thus the final differential invariant which is a first integral of Eq.~\eqref{eq:cara_glucose} is given by
\begin{equation}
    I_{4}=p_{2}p_{4}.
    \label{eq:invariant_glucose}
\end{equation}
Notably, the parameter symmetry $\Gamma_{\varepsilon}^{\boldsymbol{\theta}}$ in Eq.~\eqref{eq:Gamma_glucose} preserves this last invariant as
\begin{equation}
    \hat{p}_{2}(\varepsilon)\hat{p}_{4}(\varepsilon)=p_{2}p_{4}=I_{4}\quad\forall\varepsilon\in\mathbb{R}.
\end{equation}
In conclusion, the parameters $p_{1}$, $p_{3}$ and $V_{p}$ are locally structurally identifiable. The parameters $p_{2}$ and $p_{4}$ are structurally non-identifiable whereas their product $p_{2}p_{4}$ is locally structurally identifiable.

The glucose-insulin model considered here consists of two first-order ODEs, one output equation and five parameters, and for such a small model we can calculate the universal parameter invariants and parameter symmetries using simple calculations by hand. For larger models with more equations and parameters, the linearised symmetry conditions decompose into a matrix system which can be solved using Gaussian elimination. We next demonstrate this fact by analysing the local structural identifiability of a more complicated model. 


\subsection{Analysing local structural identifiability of an epidemiological model}

We study an SEI model of the epidemiology of tuberculosis~\cite{renardy2022structural}. This model consists of three states: $S(t)$ corresponds to the susceptible population; $E(t)$ corresponds to the exposed population; and $I(t)$ corresponds to the infected population. Moreover, this model has seven parameters: $c$ is the birth rate; $\beta$ is the transmission rate; $\upsilon$ is the probability of primary infection; $\delta$ is the reactivation rate and $\mu_{S}$, $\mu_{E}$ and $\mu_{I}$ are death rates. All parameters are assumed to be positive.

We implement the CaLinInv recipe (Alg.~\ref{alg:CaLinInv}) to analyse structural identifiability. Importantly, we compare the outcomes of these calculations to those obtained through the standard differential algebra approach. Also, we generate and visualise parameter symmetries of this model. The underlying semi-manual calculations were conducted using the open-source symbolic solver \textit{SymPy}~\cite{meurer2017sympy}, and they are presented in the appendices. Details and relevant scripts are  available at the public GitHub-repository associated with this project: \url{https://github.com/JohannesBorgqvist/symmetries_and_structural_identifiability}. 


\subsubsection*{Step 1: Canonical coordinates}

The original system of ODEs is given by
\begin{equation}
    \begin{split}
\dfrac{\mathrm{d}S}{\mathrm{d}t}&=c-\beta SI-\mu_{S}S,\\
\dfrac{\mathrm{d}E}{\mathrm{d}t}&=(1-\upsilon)\beta SI-\delta E-\mu_{E}E,\\
\dfrac{\mathrm{d}I}{\mathrm{d}t}&=\upsilon\beta SI+\delta E-\mu_{I}I.\\
\end{split}
    \label{eq:SEI}
\end{equation}
We consider two outputs given by a proportion of the exposed population and a proportion of the infected population, where these proportions are encoded by the parameters $k_{E}$ and $k_{I}$, respectively. Then, the two observed outputs denoted by $y_{E}$ and $y_{I}$ are given by
\begin{align}
    y_{E}&=k_{E}E,\label{eq:output_E}\\
    y_{I}&=k_{I}I.\label{eq:output_I}
\end{align}
In total, the system has nine parameters so that
\begin{equation}
\boldsymbol{\theta}=(c,\beta,\delta,\upsilon,\mu_{S},\mu_{E},\mu_{I},k_{E},k_{I}).
\label{eq:theta_SEI}
\end{equation}
The system of output ODEs is given by
\begin{align}
\dfrac{\mathrm{d}y_{E}}{\mathrm{d}t} =&-\frac{\delta y_{E}}{\upsilon} - \frac{k_{E} \mu_{I} y_{I}}{k_{I}} - \frac{k_{E}}{k_{I}}\dfrac{\mathrm{d}y_{I}}{\mathrm{d}t} - \mu_{E} y_{E} + \frac{k_{E} \mu_{I} y_{I}}{\upsilon k_{I}} + \frac{k_{E}}{\upsilon k_{I}}\dfrac{\mathrm{d}y_{I}}{\mathrm{d}t},\label{eq:ODE_y_E_2}\\  
\dfrac{\mathrm{d}^{2}y_{I}}{\mathrm{d}t^{2}} =&\textcolor{white}{-}\beta c \upsilon y_{I} + \frac{\beta \delta y_{E} y_{I}}{k_{E}} - \frac{\beta \mu_{I} y_{I}^{2}}{k_{I}} - \frac{\beta y_{I}}{k_{I}}\dfrac{\mathrm{d}y_{I}}{\mathrm{d}t} + \frac{\delta k_{I} \mu_{S} y_{E}}{k_{E}} - \frac{\delta k_{I} y_{E}}{k_{E} y_{I}}\dfrac{\mathrm{d}y_{I}}{\mathrm{d}t} + \frac{\delta k_{I}}{k_{E}}\dfrac{\mathrm{d}y_{E}}{\mathrm{d}t}\nonumber\\
&-\mu_{I} \mu_{S} y_{I} - \mu_{S} \dfrac{\mathrm{d}y_{I}}{\mathrm{d}t} + \frac{1}{y_{I}}\left(\dfrac{\mathrm{d}y_{I}}{\mathrm{d}t}\right)^{2}.\label{eq:ODE_y_I_2}
\end{align}


\subsubsection*{Step 2: Linearised symmetry conditions}

The two linearised symmetry conditions can be simplified to the following two equations
\begin{align}
0 =&\textcolor{white}{-}\beta c k_{E}^{2} k_{I}^{2} \chi_{\upsilon} y_{I}^{2} - \beta \delta k_{I}^{2} \chi_{k_{E}} y_{E} y_{I}^{2} + \beta \upsilon k_{E}^{2} k_{I}^{2} \chi_{c} y_{I}^{2} - \beta k_{E}^{2} k_{I} \chi_{\mu_{I}} y_{I}^{3} + \beta k_{E}^{2} \mu_{I} \chi_{k_{I}} y_{I}^{3} \nonumber\\
&+ \beta k_{E}^{2} \chi_{k_{I}} y_{I}^{2} \dfrac{\mathrm{d}y_{I}}{\mathrm{d}t} + \beta k_{E} k_{I}^{2} \chi_{\delta} y_{E} y_{I}^{2} + c \upsilon k_{E}^{2} k_{I}^{2} \chi_{\beta} y_{I}^{2} + \delta k_{E} k_{I}^{3} \chi_{\mu_{S}} y_{E} y_{I} \nonumber\\
&+ \delta k_{E} k_{I}^{2} \mu_{S} \chi_{k_{I}} y_{E} y_{I} + \delta k_{E} k_{I}^{2} \chi_{\beta} y_{E} y_{I}^{2} - \delta k_{E} k_{I}^{2} \chi_{k_{I}} y_{E} \dfrac{\mathrm{d}y_{I}}{\mathrm{d}t} + \delta k_{E} k_{I}^{2} \chi_{k_{I}} y_{I} \dfrac{\mathrm{d}y_{E}}{\mathrm{d}t} \nonumber\\
&- \delta k_{I}^{3} \mu_{S} \chi_{k_{E}} y_{E} y_{I} + \delta k_{I}^{3} \chi_{k_{E}} y_{E} \dfrac{\mathrm{d}y_{I}}{\mathrm{d}t} - \delta k_{I}^{3} \chi_{k_{E}} y_{I} \dfrac{\mathrm{d}y_{E}}{\mathrm{d}t} - k_{E}^{2} k_{I}^{2} \mu_{I} \chi_{\mu_{S}} y_{I}^{2} \nonumber\\
&- k_{E}^{2} k_{I}^{2} \mu_{S} \chi_{\mu_{I}} y_{I}^{2} - k_{E}^{2} k_{I}^{2} \chi_{\mu_{S}} y_{I} \dfrac{\mathrm{d}y_{I}}{\mathrm{d}t} - k_{E}^{2} k_{I} \mu_{I} \chi_{\beta} y_{I}^{3} - k_{E}^{2} k_{I} \chi_{\beta} y_{I}^{2} \dfrac{\mathrm{d}y_{I}}{\mathrm{d}t} \nonumber\\
&+ k_{E} k_{I}^{3} \mu_{S} \chi_{\delta} y_{E} y_{I} - k_{E} k_{I}^{3} \chi_{\delta} y_{E} \dfrac{\mathrm{d}y_{I}}{\mathrm{d}t} + k_{E} k_{I}^{3} \chi_{\delta} y_{I} \dfrac{\mathrm{d}y_{E}}{\mathrm{d}t},\label{eq:lin_sym_SEI_1}\\
0 =&\textcolor{white}{-}\delta k_{I}^{2} \chi_{\upsilon} y_{E} - \upsilon^{2} k_{E} k_{I} \chi_{\mu_{I}} y_{I} + \upsilon^{2} k_{E} \mu_{I} \chi_{k_{I}} y_{I} + \upsilon^{2} k_{E} \chi_{k_{I}} \dfrac{\mathrm{d}y_{I}}{\mathrm{d}t} - \upsilon^{2} k_{I}^{2} \chi_{\mu_{E}} y_{E} \nonumber\\
&- \upsilon^{2} k_{I} \mu_{I} \chi_{k_{E}} y_{I} - \upsilon^{2} k_{I} \chi_{k_{E}} \dfrac{\mathrm{d}y_{I}}{\mathrm{d}t} + \upsilon k_{E} k_{I} \chi_{\mu_{I}} y_{I} - \upsilon k_{E} \mu_{I} \chi_{k_{I}} y_{I} \nonumber\\
&- \upsilon k_{E} \chi_{k_{I}} \dfrac{\mathrm{d}y_{I}}{\mathrm{d}t} - \upsilon k_{I}^{2} \chi_{\delta} y_{E} + \upsilon k_{I} \mu_{I} \chi_{k_{E}} y_{I} + \upsilon k_{I} \chi_{k_{E}} \dfrac{\mathrm{d}y_{I}}{\mathrm{d}t} - k_{E} k_{I} \mu_{I} \chi_{\upsilon} y_{I}\nonumber\\
&- k_{E} k_{I} \chi_{\upsilon} \dfrac{\mathrm{d}y_{I}}{\mathrm{d}t}.\label{eq:lin_sym_SEI_2}
\end{align}
The family of generating vector fields solving these linearised symmetry conditions is given by
\begin{equation} 
\begin{split} X_{\boldsymbol{\theta}}=&\,\,[(\alpha_{2}-\alpha_{1})c\upsilon-\alpha_{2}c]\partial_{c}+\alpha_{1}\beta\partial_{\beta}+(\alpha_{2}-\alpha_{1})\delta\partial_{\delta}+(\alpha_{1}-\alpha_{2})\upsilon \left(\upsilon - 1\right)\partial_{\upsilon}\\
    &\,\,\,\,\,-(\alpha_{2}-\alpha_{1})\delta\partial_{\mu_{E}}+\alpha_{1}k_{I}\partial_{k_{I}}+\alpha_{2}k_{E}\partial_{k_{E}}\,,
    \end{split}
    \label{eq:X_SEI}
\end{equation}
where $\alpha_{1}$ and $\alpha_{2}$ are two arbitrary coefficients. Using this family of vector fields, we generate parameter symmetries of the SEI model. Specifically, these symmetries $\Gamma_{\varepsilon}^{\boldsymbol{\theta}}$ generate transformed parameter vectors according to $\Gamma_{\varepsilon}^{\boldsymbol{\theta}}:\boldsymbol{\theta}\mapsto\hat{\boldsymbol{\theta}}(\varepsilon)$ that are given by
\begin{equation}
\hat{\boldsymbol{\theta}}(\varepsilon)=(\hat{c}(\varepsilon),\hat{\beta}(\varepsilon),\hat{\delta}(\varepsilon),\hat{\upsilon}(\varepsilon),\hat{\mu}_{S}(\varepsilon),\hat{\mu}_{E}(\varepsilon),\hat{\mu}_{I}(\varepsilon),\hat{k}_{I}(\varepsilon),\hat{k}_{E}(\varepsilon)).
\label{eq:theta_SEI_trans}
\end{equation}
Moreover, the transformed parameters $\hat{\boldsymbol{\theta}}(\varepsilon)$ solve the following system of ODEs:
\begin{align}
  \frac{\mathrm{d}\hat{c}}{\mathrm{d}\varepsilon}&=(\alpha_{2}-\alpha_{1})\hat{c}\hat{\upsilon}-\alpha_{2}\hat{c},\quad&\hat{c}(\varepsilon=0)=c,\label{eq:trans_c}\\
  \frac{\mathrm{d}\hat{\beta}}{\mathrm{d}\varepsilon}&=\alpha_{1}\hat{\beta},\quad&\hat{\beta}(\varepsilon=0)=\beta,\label{eq:trans_beta}\\
  \frac{\mathrm{d}\hat{\delta}}{\mathrm{d}\varepsilon}&=(\alpha_{2}-\alpha_{1})\hat{\delta},\quad&\hat{\delta}(\varepsilon=0)=\delta,\label{eq:trans_delta}\\
  \frac{\mathrm{d}\hat{\upsilon}}{\mathrm{d}\varepsilon}&=(\alpha_{1}-\alpha_{2})\hat{\upsilon} \left(\hat{\upsilon} - 1\right),\quad&\hat{\upsilon}(\varepsilon=0)=\upsilon,\label{eq:trans_epsilon}\\
  \frac{\mathrm{d}\hat{\mu}_{S}}{\mathrm{d}\varepsilon}&=0,\quad&\hat{\mu}_{S}(\varepsilon=0)=\mu_{S},\label{eq:trans_muS}\\
  \frac{\mathrm{d}\hat{\mu}_{E}}{\mathrm{d}\varepsilon}&=-(\alpha_{2}-\alpha_{1})\hat{\delta},\quad&\hat{\mu}_{E}(\varepsilon=0)=\mu_{E},\label{eq:trans_muE}\\
  \frac{\mathrm{d}\hat{\mu}_{I}}{\mathrm{d}\varepsilon}&=0,\quad&\hat{\mu}_{I}(\varepsilon=0)=\mu_{I},\label{eq:trans_muI}\\
  \frac{\mathrm{d}\hat{k}_{E}}{\mathrm{d}\varepsilon}&=\alpha_{2}\hat{k}_{E},\quad&\hat{k}_{E}(\varepsilon=0)=k_{E},\label{eq:trans_kE}\\
  \frac{\mathrm{d}\hat{k}_{I}}{\mathrm{d}\varepsilon}&=\alpha_{1}\hat{k}_{I},\quad&\hat{k}_{I}(\varepsilon=0)=k_{I}.\label{eq:trans_kI}  
\end{align}
This system of ODEs is readily solved numerically in order to characterise the action of any specific symmetry defined by specific choices of the coefficients $\alpha_{1}$ and $\alpha_{2}$. Since the parameter space of the SEI model is nine-dimensional, we visualise the action of the parameter symmetry of interest in two- and three-dimensional subspaces. Specifically, we have numerically generated six parameter vectors as starting points illustrated by diamonds in order to plot the corresponding transformed parameter vectors $\hat{\boldsymbol{\theta}}(\varepsilon)$ in Eq.~\eqref{eq:theta_SEI_trans}, and relevant figures are presented in the next step of the CaLinInv recipe. The interpretation of these curves in parameter space is that the illustrated parameters have indistinguishable model behaviour.    


\subsubsection*{Step 3: Universal Invariants}

The following seven universal parameter invariants were found (for details, see the Appendices) using the method of characteristics:
  
  \begin{align}
I_{1}&=\mu_{I}\,,\label{eq:SEI_I_1}\\
      I_{2}&=\mu_{S}\,,\label{eq:SEI_I_2}\\
      I_{3}&=\delta+\mu_{E}\,,  \label{eq:I3}\\
      I_{4}&=\frac{\beta}{k_{I}}\,,\label{eq:I4}\\
      I_{5}&=\frac{\upsilon}{\delta(1-\upsilon)}\,,\label{eq:I5}\\
      I_{6}&=\beta{c}\upsilon\,,\label{eq:I6}\\
      I_{7}&=\frac{\beta\delta}{k_{E}}\,.\label{eq:I7}      
    \end{align}
These universal parameter invariants correspond to the same globally structurally identifiable parameter combinations found by Renardy et al.~\cite{renardy2022structural} using the standard differential algebra approach. Since these universal parameter invariants also correspond to the locally structurally identifiable parameter combinations (Cor. \ref{cor:structural_identifiability_symmetries}), the locally and globally structurally identifiable parameter combinations are the same in the case of the SEI model. By solving the ODE system in Eqs.~\eqref{eq:trans_c}--\eqref{eq:trans_kI} for the arbitrary coefficients $(\alpha_{1},\alpha_{2})=(2,1)$, we visualise the action of the corresponding parameter symmetry and illustrate that it preserves all seven universal parameter invariants.

\begin{figure}[htbp!]
\begin{center}
\includegraphics[width=\textwidth]{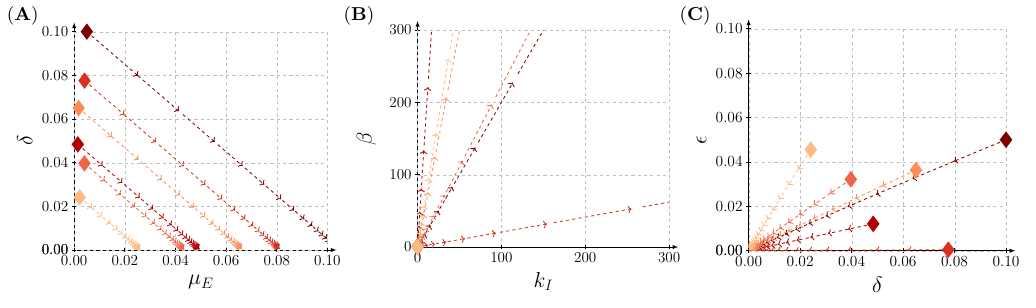}
  \caption{\textit{Two-dimensional projections of the action of a parameter symmetry $\Gamma_{\varepsilon}^{\boldsymbol{\theta}}$ of the SEI model}. The action of the symmetry $\Gamma_{\varepsilon}^{\boldsymbol{\theta}}$ generated by solving the system of ODEs in Eqs.~\eqref{eq:trans_c}-\eqref{eq:trans_kI} for the coefficients $(\alpha_{1},\alpha_{2})=(2,1)$ on six parameters illustrated by diamonds is visualised in three different two-dimensional subspaces of the nine-dimensional parameter space of the SEI model. The interpretation of these curves in parameter space is that the illustrated parameters have indistinguishable model behaviour. These subspaces are (\textbf{A}) $(\mu_{E},\delta)$ for which the invariant $I_{3}=\delta+\mu_{E}$ is preserved, (\textbf{B}) $(k_{I},\beta)$ for which the invariant $I_{4}=\beta/k_{I}$ is preserved and (\textbf{C}) $(\delta,\upsilon)$ for which the invariant $I_{5}=\upsilon/(\delta(1-\upsilon))$ is preserved.}
  \label{fig:SEI_2D}
\end{center}
\end{figure}

First, we visualise the action of this symmetry on three parameter pairs (Fig.~\ref{fig:SEI_2D}). These three parameter pairs are $(\mu_{E},\delta)$ for which the symmetry preserves the invariant $I_{3}$ in Eq.~\eqref{eq:I3}, $(k_{I},\beta)$ for which the symmetry preserves the invariant $I_{4}$ in Eq.~\eqref{eq:I4} and $(\delta,\upsilon)$ for which the symmetry preserves the invariant $I_{5}$ in Eq.~\eqref{eq:I5}. Similarly, we visualise the action of this symmetry on two parameter triplets (Fig.~\ref{fig:SEI_3D}). These triplets are $(\beta,\upsilon,c)$ for which the symmetry preserves the invariant $I_{6}$ in Eq.~\eqref{eq:I6}. and $(\beta,\delta,k_{E})$ for which the symmetry preserves the invariant $I_{7}$ in Eq.~\eqref{eq:I7}.

\begin{figure}[htbp!]
\begin{center}
\includegraphics[width=\textwidth]{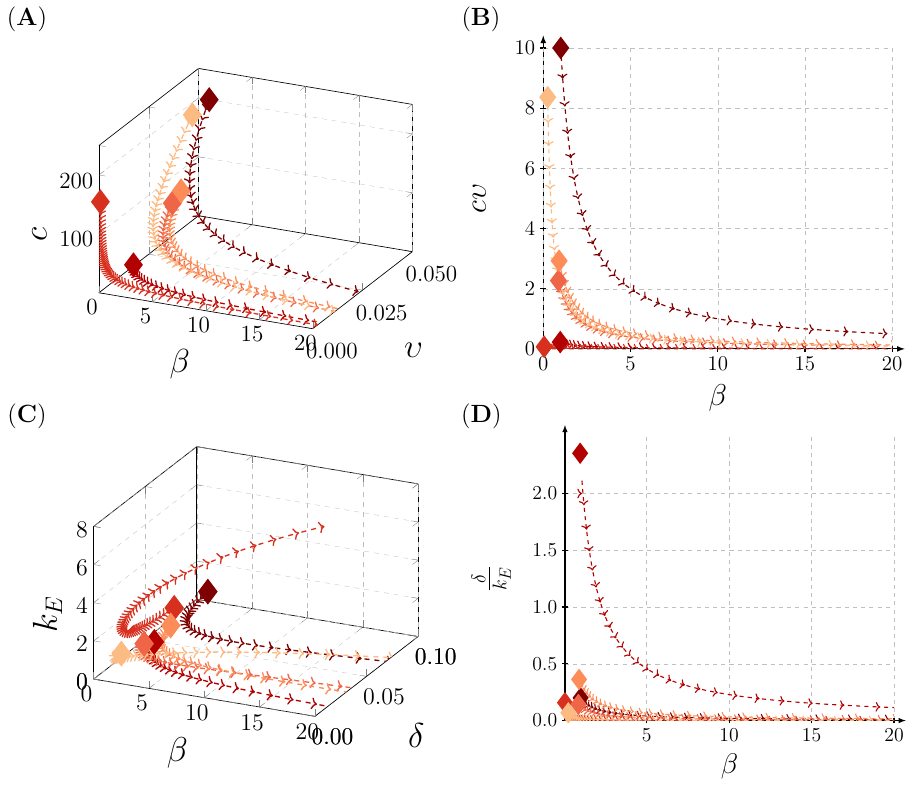}
  \caption{\textit{Three-dimensional projections of the action of a parameter symmetry $\Gamma_{\varepsilon}^{\boldsymbol{\theta}}$ of the SEI model}. The interpretation of these curves in parameter space is that the illustrated parameters have indistinguishable model behaviour. The action of the symmetry $\Gamma_{\varepsilon}^{\boldsymbol{\theta}}$ generated by solving the system of ODEs in Eqs.~\eqref{eq:trans_c}--\eqref{eq:trans_kI} for the coefficients $(\alpha_{1},\alpha_{2})=(2,1)$ on six parameters illustrated by diamonds is visualised in two different three-dimensional subspaces as well as in two different two-dimensional subspaces of the nine-dimensional parameter space of the SEI model. These subspaces are (\textbf{A}) $(\beta,\upsilon,c)$ for which the invariant $I_{6}=\beta c\upsilon$ is preserved, (\textbf{B}) $(\beta,c\upsilon)$ for which the invariant $I_{6}=\beta c\upsilon$ is preserved, (\textbf{C}) $(\beta,\delta,k_{E})$ for which the invariant $I_{7}=\beta\delta/k_{E}$ is preserved and (\textbf{D}) $(\beta,\delta/k_{E})$ for which the invariant $I_{7}=\beta\delta/k_{E}$ is preserved. }
  \label{fig:SEI_3D}
\end{center}
\end{figure}


\section{Discussion}

In this work, we have demonstrated how local structural identifiability can be understood in terms of the differential invariants of parameter symmetries. For the last two decades, the notion of classical Lie symmetries of ODEs acting on the independent and dependent variables by mapping solutions to other solutions~\cite{bluman1989symmetries,hydon2000symmetry,olver2000applications,stephani1989differential} has been extended to full symmetries which also account for parameters. Such full symmetries have been a large focus of research on the structural identifiability of mechanistic ODE models~\cite{yates2009structural,merkt2015higher,massonis2020finding,castro2020structuralIdentifiability,villaverde2022symmetries}, and in particular a large emphasis has been put on developing algorithms for finding such symmetries in an automated fashion. However, the link between algebraic methods for determining structural identifiability and symmetry-based methods has, until this point, remained elusive. In this work, we establish this conceptual link by introducing so-called \textit{parameter symmetries}, Lie transformations that alter parameters while simultaneously preserving the observed outputs. In addition, we demonstrate that local structural identifiability can be understood in terms of the differential invariants of these parameter symmetries. Based on these results, we propose a three step recipe referred to as the CaLinInv recipe which involves: (i) re-writing the original first-order ODE system as an equivalent ODE system for the outputs, also referred to as the \textit{Ca}nonical coordinates; (ii) finding the parameter symmetries by solving the \textit{Lin}earised symmetry conditions; and (iii) elucidating the global structural identifiability by calculating the differential \textit{Inv}ariants of the parameter symmetries. We later demonstrated practical use of the CaLinInv recipe by analysing the local structural identifiability of two previously analysed mechanistic models of biological systems. 

The CaLinInv recipe constitutes a new framing of the classical differential algebra approach for elucidating structural identifiability in terms of Lie symmetries. The steps in this recipe are reminiscent of the differential algebra approach for structural identifiability. In fact, the first steps---finding algebraic equations relating inputs and outputs with parameters~\cite{ljung1994global}---are identical. Technically, the differential algebra approach constructs a map between the parameters and the parameter combinations that can be inferred from the inputs and outputs, and then structural identifiability requires that this map is injective~\cite{rey2023benchmarking}. The parameter symmetries proposed in this work are a way of constructing such maps, and the injectivity criterion can be understood in terms of the universal differential invariants of parameter symmetries. By framing structural identifiability in terms of universal invariants of parameter symmetries, we understand why the standard differential algebra approach, which extracts coefficients in front of the monomials of the polynomial system of output ODEs, always finds identifiable parameter quantities; that is, universal parameter invariants. This is due to the fact that the coefficients that are extracted in the differential algebra approach will either be a constant or a universal parameter invariant. This property is ensured by the definition of invariants of symmetries combined with the so-called \textit{linearised symmetry conditions}, the equations defining parameter symmetries. In other words, the standard differential algebra approach is consistent with the notion of local structural identifiability expressed in terms of universal parameter invariants. Moreover, our symmetry-based approach for analysing local structural identifiability is theoretically generalisable to other mechanistic models consisting of, say, systems of partial differential equations. However, in practice, computer-assisted versions of this approach must be developed in order to analyse the structural identifiability of such systems. In particular, two aspects of the symmetry-based structural identifiability analysis should be automated, namely the formulation of the input-output system and an implementation for solving the linearised symmetry conditions.

An interesting future research direction is to automate the CaLinInv recipe for systems of ODEs and eventually systems of partial differential equations. In the context of ODEs, it is known that if the right-hand sides of the original first-order system are rational functions of the states it is always possible to re-write the original system of first-order ODEs as a system of ODEs depending solely on the observed outputs~\cite{ljung1994global}. Given such a re-formulated system in terms of the observed outputs, the two remaining steps of the CaLinInv recipe are straightforward to automate using symbolic calculations. This is also why the recipe can be automated, since many existing software for structural identifiability (e.g.~\cite{dong2023structuralIdentifiabilityJL}) conduct the first step of re-writing the original system so that it solely depends on the observed outputs in an automated fashion. Accordingly, the CaLinInv recipe can be implemented on top of existing algorithms for structural identifiability analyses based on the differential algebra approach, which would result in an algorithm that not only yields the identifiable parameter combinations but also the family of parameter transformations that preserves the observed outputs; that is, a family of parameter symmetries. 

Altogether, this work establishes a link between the existing body of work on full symmetries~\cite{yates2009structural,merkt2015higher,massonis2020finding,castro2020structuralIdentifiability,villaverde2022symmetries} and the differential algebra approach for structural identifiability~\cite{ljung1994global,hong2020global,saccomani2001new,walter1982global}. Until now, it has been unclear how the full symmetries transforming independent and dependent variables as well as parameters relate to structural identifiability. The result which is closest to such a link was presented by Castro and de Boer~\cite{castro2020structuralIdentifiability} which states that a particular parameter is globally structurally identifiable if the only way to scale this parameter by a scaling factor that preserves the observed outputs, is if this scaling factor equals one. In fact, this is exactly what it means to say that the particular parameter of interest is a parameter invariant, and our theoretical framework based on parameter symmetries has formalised this result by demonstrating that a parameter is locally structurally identifiable if and only if it is a universal parameter invariant. Indeed, our result generalises to any parameter symmetry as it is not restricted to the scalings studied by Castro and de Boer~\cite{castro2020structuralIdentifiability}. As has been noted previously, by only focusing on scalings one misses other important parameter transformations that preserve observed outputs~\cite{villaverde2021testing}, and thus the conclusions about structural identifiability drawn from studying such scalings can be misleading. In addition, Lie symmetries acting on states and parameters have previously been calculated in an automated fashion~\cite{massonis2020finding}, but the exact link between structural identifiability and these calculated symmetries has been unclear. A succinct way of expressing this link in words which is our main result is that the \textit{locally structurally identifiable parameter quantities are given by universal parameter invariants}.  We have made a case for a perspective in which local structural identifiability is expressed in terms of differential invariants of parameter symmetries, and this work is a stepping stone towards fully exploiting the power of symmetry methods within the realm of local structural identifiability.



\section*{Data availability statement}
Regarding the structural identifiability-analysis of the SEI-model, details and relevant scripts are available at the public github-repository associated with this project; \url{https://github.com/JohannesBorgqvist/symmetries_and_structural_identifiability}.


\section*{Acknowledgements}
JGB is funded by a grant from the Wenner-Gren foundations (Grant number: FT2023-0005). JBG thanks the Wenner-Gren foundations for a research fellowship. APB thanks the Mathematical Institute, Oxford for a Hooke Research Fellowship. This work was supported by a grant from the Simons Foundation (MP-structural identifiabilityP-00001828, REB). or the purpose of open access, the author has applied a
CC BY public copyright licence to any author accepted manuscript arising from this submission.

\section*{CRediT author statment}
\begin{itemize}
\item[\textbf{JGB}] Conceptualization, Methodology, Visualization (made the figures), Writing - Original Draft, Writing - Review \& Editing, Formal analysis (derived and proved theorems and conducted calculations), Software (wrote scripts that conducted calculations). 
\item[\textbf{APB}] Conceptualization, Writing - Original Draft, Writing - Review \& Editing.
\item[\textbf{FO}] Conceptualization, Writing - Original Draft, Writing - Review \& Editing,
\item[\textbf{REB}] Conceptualization, Writing - Original Draft, Writing - Review \& Editing.
\end{itemize}

\clearpage
\appendix  

\renewcommand{\thesection}{\Alph{section}}

\counterwithin{equation}{section}  

\counterwithin{figure}{section}
\counterwithin{table}{section}

\section*{Appendices}

\section{Analysing structural identifiability of an epidemiological model}

We study an SEI model of the epidemiology of tuberculosis~\cite{renardy2022structural}. This model consists of three states: $S(t)$ corresponds to the susceptible population; $E(t)$ corresponds to the exposed population; and $I(t)$ corresponds to the infected population. Moreover, this model has seven rate parameters: $c$ is the birth rate; $\beta$ is the transmission rate; $\upsilon$ is the probability of primary infection; $\delta$ is the reactivation rate and $\mu_{S}$, $\mu_{E}$ and $\mu_{I}$ are death rates. All parameters are assumed to be positive. The corresponding system of ODEs is given by
\begin{equation}
    \begin{split}
\dfrac{\mathrm{d}S}{\mathrm{d}t}&=c-\beta SI-\mu_{S}S,\\
\dfrac{\mathrm{d}E}{\mathrm{d}t}&=(1-\upsilon)\beta SI-\delta E-\mu_{E}E,\\
\dfrac{\mathrm{d}I}{\mathrm{d}t}&=\upsilon\beta SI+\delta E-\mu_{I}I.\\
\end{split}
    \label{eq:SEI_app}
\end{equation}
Provided this system, we consider two outputs given by a proportion of the exposed population and a proportion of the infected population where these proportions are encoded by the parameters $k_{E}$ and $k_{I}$, respectively. Then, the two observed outputs denoted by $y_{E}$ and $y_{I}$ are given by
\begin{align}
    y_{E}&=k_{E}E,\label{eq:output_E_app}\\
    y_{I}&=k_{I}I.\label{eq:output_I_app}
\end{align}
In total, the system has nine rate parameters so that
\begin{equation}
\boldsymbol{\theta}=(c,\beta,\delta,\upsilon,\mu_{S},\mu_{E},\mu_{I},k_{E},k_{I}).
\label{eq:theta_SEI_app}
\end{equation}
We implement the CaLinInv recipe in order to analyse global structural identifiability. Importantly, we compare the outcomes of these calculations to those obtained through the standard differential algebra approach. Details and relevant scripts are  available at the public github-repository associated with this project; \url{https://github.com/JohannesBorgqvist/symmetries_and_structural_identifiability}.


\subsection{Finding generating vector fields by solving the linearised symmetry conditions}

Starting with the first step in the CaLinInv recipe, the system of output ODEs is given by
\begin{align}
\dfrac{\mathrm{d}y_{E}}{\mathrm{d}t} =&-\frac{\delta y_{E}}{\upsilon} - \frac{k_{E} \mu_{I} y_{I}}{k_{I}} - \frac{k_{E}}{k_{I}}\dfrac{\mathrm{d}y_{I}}{\mathrm{d}t} - \mu_{E} y_{E} + \frac{k_{E} \mu_{I} y_{I}}{\upsilon k_{I}} + \frac{k_{E}}{\upsilon k_{I}}\dfrac{\mathrm{d}y_{I}}{\mathrm{d}t},\label{eq:ODE_y_E_2_app}\\  
\dfrac{\mathrm{d}^{2}y_{I}}{\mathrm{d}t^{2}} =&\textcolor{white}{-}\beta c \upsilon y_{I} + \frac{\beta \delta y_{E} y_{I}}{k_{E}} - \frac{\beta \mu_{I} y_{I}^{2}}{k_{I}} - \frac{\beta y_{I}}{k_{I}}\dfrac{\mathrm{d}y_{I}}{\mathrm{d}t} + \frac{\delta k_{I} \mu_{S} y_{E}}{k_{E}} - \frac{\delta k_{I} y_{E}}{k_{E} y_{I}}\dfrac{\mathrm{d}y_{I}}{\mathrm{d}t} + \frac{\delta k_{I}}{k_{E}}\dfrac{\mathrm{d}y_{E}}{\mathrm{d}t}\nonumber\\
&-\mu_{I} \mu_{S} y_{I} - \mu_{S} \dfrac{\mathrm{d}y_{I}}{\mathrm{d}t} + \frac{1}{y_{I}}\left(\dfrac{\mathrm{d}y_{I}}{\mathrm{d}t}\right)^{2}.\label{eq:ODE_y_I_2_app}
\end{align}
Moreover, the family of generating vector fields associated with the parameter symmetries of interest has the following structure
\begin{equation}
X_{\boldsymbol{\theta}}=\chi_{c}\partial_{c}+\chi_{\beta}\partial_{\beta}+\chi_{\delta}\partial_{\delta}+\chi_{\upsilon}\partial_{\upsilon}+\chi_{\mu_S}\partial_{\mu_S}+\chi_{\mu_E}\partial_{\mu_E}+\chi_{\mu_I}\partial_{\mu_I}+\chi_{k_{E}}\partial_{k_{E}}+\chi_{k_{I}}\partial_{k_{I}},
\label{eq:generator_SEI}
\end{equation}
where all infinitesimals are functions of the rate parameters $\boldsymbol{\theta}$ in Eq.~\eqref{eq:theta_SEI_app}. The two linearised symmetry conditions defining the generating vector fields can be simplified to the following two equations
\begin{align}
0 =&\textcolor{white}{-}\beta c k_{E}^{2} k_{I}^{2} \chi_{\upsilon} y_{I}^{2} - \beta \delta k_{I}^{2} \chi_{k_{E}} y_{E} y_{I}^{2} + \beta \upsilon k_{E}^{2} k_{I}^{2} \chi_{c} y_{I}^{2} - \beta k_{E}^{2} k_{I} \chi_{\mu_{I}} y_{I}^{3} + \beta k_{E}^{2} \mu_{I} \chi_{k_{I}} y_{I}^{3} \nonumber\\
&+ \beta k_{E}^{2} \chi_{k_{I}} y_{I}^{2} \dfrac{\mathrm{d}y_{I}}{\mathrm{d}t} + \beta k_{E} k_{I}^{2} \chi_{\delta} y_{E} y_{I}^{2} + c \upsilon k_{E}^{2} k_{I}^{2} \chi_{\beta} y_{I}^{2} + \delta k_{E} k_{I}^{3} \chi_{\mu_{S}} y_{E} y_{I} \nonumber\\
&+ \delta k_{E} k_{I}^{2} \mu_{S} \chi_{k_{I}} y_{E} y_{I} + \delta k_{E} k_{I}^{2} \chi_{\beta} y_{E} y_{I}^{2} - \delta k_{E} k_{I}^{2} \chi_{k_{I}} y_{E} \dfrac{\mathrm{d}y_{I}}{\mathrm{d}t} + \delta k_{E} k_{I}^{2} \chi_{k_{I}} y_{I} \dfrac{\mathrm{d}y_{E}}{\mathrm{d}t} \nonumber\\
&- \delta k_{I}^{3} \mu_{S} \chi_{k_{E}} y_{E} y_{I} + \delta k_{I}^{3} \chi_{k_{E}} y_{E} \dfrac{\mathrm{d}y_{I}}{\mathrm{d}t} - \delta k_{I}^{3} \chi_{k_{E}} y_{I} \dfrac{\mathrm{d}y_{E}}{\mathrm{d}t} - k_{E}^{2} k_{I}^{2} \mu_{I} \chi_{\mu_{S}} y_{I}^{2} \nonumber\\
&- k_{E}^{2} k_{I}^{2} \mu_{S} \chi_{\mu_{I}} y_{I}^{2} - k_{E}^{2} k_{I}^{2} \chi_{\mu_{S}} y_{I} \dfrac{\mathrm{d}y_{I}}{\mathrm{d}t} - k_{E}^{2} k_{I} \mu_{I} \chi_{\beta} y_{I}^{3} - k_{E}^{2} k_{I} \chi_{\beta} y_{I}^{2} \dfrac{\mathrm{d}y_{I}}{\mathrm{d}t} \nonumber\\
&+ k_{E} k_{I}^{3} \mu_{S} \chi_{\delta} y_{E} y_{I} - k_{E} k_{I}^{3} \chi_{\delta} y_{E} \dfrac{\mathrm{d}y_{I}}{\mathrm{d}t} + k_{E} k_{I}^{3} \chi_{\delta} y_{I} \dfrac{\mathrm{d}y_{E}}{\mathrm{d}t},\label{eq:lin_sym_SEI_1_app}\\
0 =&\textcolor{white}{-}\delta k_{I}^{2} \chi_{\upsilon} y_{E} - \upsilon^{2} k_{E} k_{I} \chi_{\mu_{I}} y_{I} + \upsilon^{2} k_{E} \mu_{I} \chi_{k_{I}} y_{I} + \upsilon^{2} k_{E} \chi_{k_{I}} \dfrac{\mathrm{d}y_{I}}{\mathrm{d}t} - \upsilon^{2} k_{I}^{2} \chi_{\mu_{E}} y_{E} \nonumber\\
&- \upsilon^{2} k_{I} \mu_{I} \chi_{k_{E}} y_{I} - \upsilon^{2} k_{I} \chi_{k_{E}} \dfrac{\mathrm{d}y_{I}}{\mathrm{d}t} + \upsilon k_{E} k_{I} \chi_{\mu_{I}} y_{I} - \upsilon k_{E} \mu_{I} \chi_{k_{I}} y_{I} \nonumber\\
&- \upsilon k_{E} \chi_{k_{I}} \dfrac{\mathrm{d}y_{I}}{\mathrm{d}t} - \upsilon k_{I}^{2} \chi_{\delta} y_{E} + \upsilon k_{I} \mu_{I} \chi_{k_{E}} y_{I} + \upsilon k_{I} \chi_{k_{E}} \dfrac{\mathrm{d}y_{I}}{\mathrm{d}t} - k_{E} k_{I} \mu_{I} \chi_{\upsilon} y_{I}\nonumber\\
&- k_{E} k_{I} \chi_{\upsilon} \dfrac{\mathrm{d}y_{I}}{\mathrm{d}t}.\label{eq:lin_sym_SEI_2_app}
\end{align}
The first linearised symmetry condition in Eq.~\eqref{eq:lin_sym_SEI_1_app} decomposes into subequations based on the products between the various powers of the states and their derivatives according to
\begin{align}
y_{E} \dfrac{\mathrm{d}y_{I}}{\mathrm{d}t}\quad:\quad{0}&=- \delta k_{E} k_{I}^{2} \chi_{k_{I}} + \delta k_{I}^{3} \chi_{k_{E}} - k_{E} k_{I}^{3} \chi_{\delta},\label{eq:det_eq_1}\\
y_{I} \dfrac{\mathrm{d}y_{E}}{\mathrm{d}t}\quad:\quad{0}&=\delta k_{E} k_{I}^{2} \chi_{k_{I}} - \delta k_{I}^{3} \chi_{k_{E}} + k_{E} k_{I}^{3} \chi_{\delta},\label{eq:det_eq_2}\\
y_{I} \dfrac{\mathrm{d}y_{I}}{\mathrm{d}t}\quad:\quad{0}&=k_{E}^{2} k_{I}^{2} \chi_{\mu_{S}},\label{eq:det_eq_3}\\
y_{E} y_{I}\quad:\quad{0}&=\delta k_{E} k_{I}^{3} \chi_{\mu_{S}} + \delta k_{E} k_{I}^{2} \mu_{S} \chi_{k_{I}} - \delta k_{I}^{3} \mu_{S} \chi_{k_{E}} + k_{E} k_{I}^{3} \mu_{S} \chi_{\delta},\label{eq:det_eq_4}\\
y_{I}^{2}\quad:\quad{0}&=\beta c k_{E}^{2} k_{I}^{2} \chi_{\upsilon} + \beta \upsilon k_{E}^{2} k_{I}^{2} \chi_{c} + c \upsilon k_{E}^{2} k_{I}^{2} \chi_{\beta} - k_{E}^{2} k_{I}^{2} \mu_{I} \chi_{\mu_{S}} - k_{E}^{2} k_{I}^{2} \mu_{S} \chi_{\mu_{I}},\label{eq:det_eq_5}\\
y_{I}^{2} \dfrac{\mathrm{d}y_{I}}{\mathrm{d}t}\quad:\quad{0}&=\beta k_{E}^{2} \chi_{k_{I}} - k_{E}^{2} k_{I} \chi_{\beta},\label{eq:det_eq_6}\\
y_{E} y_{I}^{2}\quad:\quad{0}&=- \beta \delta k_{I}^{2} \chi_{k_{E}} + \beta k_{E} k_{I}^{2} \chi_{\delta} + \delta k_{E} k_{I}^{2} \chi_{\beta},\label{eq:det_eq_7}\\
y_{I}^{3}\quad:\quad{0}&=- \beta k_{E}^{2} k_{I} \chi_{\mu_{I}} + \beta k_{E}^{2} \mu_{I} \chi_{k_{I}} - k_{E}^{2} k_{I} \mu_{I} \chi_{\beta}.\label{eq:det_eq_8}
\end{align}
Similarly, the second linearised symmetry condition in Eq.~\eqref{eq:lin_sym_SEI_2_app} decomposes into
\begin{align}
\dfrac{\mathrm{d}y_{I}}{\mathrm{d}t}\quad:\quad{0}&=\upsilon^{2} k_{E} \chi_{k_{I}} - \upsilon^{2} k_{I} \chi_{k_{E}} - \upsilon k_{E} \chi_{k_{I}} + \upsilon k_{I} \chi_{k_{E}} - k_{E} k_{I} \chi_{\upsilon},\quad\quad\;\;\label{eq:det_eq_9}\\
y_{E}\quad:\quad{0}&=\delta k_{I}^{2} \chi_{\upsilon} - \upsilon^{2} k_{I}^{2} \chi_{\mu_{E}} - \upsilon k_{I}^{2} \chi_{\delta},\label{eq:det_eq_10}\\
y_{I}\quad:\quad{0}&=- \upsilon^{2} k_{E} k_{I} \chi_{\mu_{I}} + \upsilon^{2} k_{E} \mu_{I} \chi_{k_{I}} - \upsilon^{2} k_{I} \mu_{I} \chi_{k_{E}}\nonumber\\
&\quad+ \upsilon k_{E} k_{I} \chi_{\mu_{I}} - \upsilon k_{E} \mu_{I} \chi_{k_{I}} + \upsilon k_{I} \mu_{I} \chi_{k_{E}} - k_{E} k_{I} \mu_{I} \chi_{\upsilon}.\label{eq:det_eq_11}
\end{align}

These equations constitute a system of linear equations on the form $M\boldsymbol{\chi}=\mathbf{0}$ where $\boldsymbol{\chi}=(\chi_{c},\chi_{\beta},\chi_{\delta},\chi_{\upsilon},\chi_{\mu_{S}},\chi_{\mu_{E}},\chi_{\mu_{I}},\chi_{k_{I}},\chi_{k_{E}})\in\mathbb{R}^{9}$ contains the parameter infinitesimals and where the $11\times 9$ matrix $M$ is given by

{\tiny
\begin{equation}
M=\begin{pmatrix}0 & 0 & - k_{E} k_{I}^{3} & 0 & 0 & 0 & 0 & - \delta k_{E} k_{I}^{2} & \delta k_{I}^{3}\\0 & 0 & k_{E} k_{I}^{3} & 0 & 0 & 0 & 0 & \delta k_{E} k_{I}^{2} & - \delta k_{I}^{3}\\0 & 0 & 0 & 0 & - k_{E}^{2} k_{I}^{2} & 0 & 0 & 0 & 0\\0 & 0 & k_{E} k_{I}^{3} \mu_{S} & 0 & \delta k_{E} k_{I}^{3} & 0 & 0 & \delta k_{E} k_{I}^{2} \mu_{S} & - \delta k_{I}^{3} \mu_{S}\\\beta \upsilon k_{E}^{2} k_{I}^{2} & c \upsilon k_{E}^{2} k_{I}^{2} & 0 & \beta c k_{E}^{2} k_{I}^{2} & - k_{E}^{2} k_{I}^{2} \mu_{I} & 0 & - k_{E}^{2} k_{I}^{2} \mu_{S} & 0 & 0\\0 & - k_{E}^{2} k_{I} & 0 & 0 & 0 & 0 & 0 & \beta k_{E}^{2} & 0\\0 & \delta k_{E} k_{I}^{2} & \beta k_{E} k_{I}^{2} & 0 & 0 & 0 & 0 & 0 & - \beta \delta k_{I}^{2}\\0 & - k_{E}^{2} k_{I} \mu_{I} & 0 & 0 & 0 & 0 & - \beta k_{E}^{2} k_{I} & \beta k_{E}^{2} \mu_{I} & 0\\0 & 0 & 0 & - k_{E} k_{I} & 0 & 0 & 0 & \upsilon^{2} k_{E} - \upsilon k_{E} & - \upsilon^{2} k_{I} + \upsilon k_{I}\\0 & 0 & - \upsilon k_{I}^{2} & \delta k_{I}^{2} & 0 & - \upsilon^{2} k_{I}^{2} & 0 & 0 & 0\\0 & 0 & 0 & - k_{E} k_{I} \mu_{I} & 0 & 0 & - \upsilon^{2} k_{E} k_{I} + \upsilon k_{E} k_{I} & \upsilon^{2} k_{E} \mu_{I} - \upsilon k_{E} \mu_{I} & - \upsilon^{2} k_{I} \mu_{I} + \upsilon k_{I} \mu_{I}\end{pmatrix}.
\end{equation}
}
Any solution $\boldsymbol{\chi}$ can be written as a linear combination of the basis vectors of the null space of the matrix $M$ denoted by $\mathcal{N}(M)$. This null space is two-dimensional and given by
\begin{equation}    
\mathcal{N}(M)\coloneqq\left\{\boldsymbol{\chi}=\begin{pmatrix}\chi_{c}\\\chi_{\beta}\\\chi_{\delta}\\\chi_{\upsilon}\\\chi_{\mu_{S}}\\\chi_{\mu_{E}}\\\chi_{\mu_{I}}\\\chi_{k_{I}}\\\chi_{k_{E}}\end{pmatrix}\in\mathbb{R}^{9}:\boldsymbol{\chi}\in\mathrm{Span}\left\{\begin{pmatrix}- c \upsilon\\\beta\\- \delta\\\upsilon \left(\upsilon - 1\right)\\0\\\delta\\0\\k_{I}\\0\end{pmatrix},\begin{pmatrix}c \left(\upsilon - 1\right)\\0\\\delta\\\upsilon \left(1 - \upsilon\right)\\0\\- \delta\\0\\0\\k_{E}\end{pmatrix}\right\}\right\}.
\end{equation}
As such, we consider the following parameter infinitesimals
\begin{equation}
\begin{pmatrix}\chi_{c}\\\chi_{\beta}\\\chi_{\delta}\\\chi_{\upsilon}\\\chi_{\mu_{S}}\\\chi_{\mu_{E}}\\\chi_{\mu_{I}}\\\chi_{k_{I}}\\\chi_{k_{E}}\end{pmatrix}=\alpha_{1}\begin{pmatrix}- c \upsilon\\\beta\\- \delta\\\upsilon \left(\upsilon - 1\right)\\0\\\delta\\0\\k_{I}\\0\end{pmatrix}+\alpha_{2}\begin{pmatrix}c \left(\upsilon - 1\right)\\0\\\delta\\\upsilon \left(1 - \upsilon\right)\\0\\- \delta\\0\\0\\k_{E}\end{pmatrix}=\begin{pmatrix}(\alpha_{2}-\alpha_{1})c\upsilon-c\alpha_{2}\\\alpha_{1}\beta \\(\alpha_{2}-\alpha_{1})\delta \\(\alpha_{1}-\alpha_{2})\upsilon \left(\upsilon - 1\right)\\0\\-(\alpha_{2}-\alpha_{1})\delta\\0\\\alpha_{1}k_{I}\\\alpha_{2}k_{E}\end{pmatrix},\label{eq:null_space_solution}
\end{equation}
which depend on two arbitrary coefficients, $\alpha_{1}$ and $\alpha_{2}$. Consequently, the infinitesimal generators of the family of parameter symmetries of the SEI model are given by
\begin{equation} 
\begin{split}
    X_{\boldsymbol{\theta}}=&\,\,[(\alpha_{2}-\alpha_{1})c\upsilon-\alpha_{2}c]\partial_{c}+\alpha_{1}\beta\partial_{\beta}+(\alpha_{2}-\alpha_{1})\delta\partial_{\delta}+(\alpha_{1}-\alpha_{2})\upsilon \left(\upsilon - 1\right)\partial_{\upsilon}\\
    &\,\,\,\,\,-(\alpha_{2}-\alpha_{1})\delta\partial_{\mu_{E}}+\alpha_{1}k_{I}\partial_{k_{I}}+\alpha_{2}k_{E}\partial_{k_{E}}.
    \end{split}
    \label{eq:X_SEI_app}
\end{equation}
We next find universal parameter invariants of these generators using the method of characteristics.


\subsection{Elucidating structural identifiability by calculating the universal parameter invariants}

The structurally identifiable quantities are given by \textit{universal parameter invariants}. Thus, we need to find parameter invariants that are independent of the arbitrary coefficients $\alpha_{1}$ and $\alpha_{2}$ appearing in Eq.~\eqref{eq:X_SEI_app}. To this end, let $s$ be a parameter that parametrises the parameter invariants $I$ of interest according to
\begin{equation}
I(s)=I(c(s),\beta(s),\delta(s),\upsilon(s),\mu_{S}(s),\mu_{E}(s),\mu_{I}(s),k_{E}(s),k_{I}(s)).
\label{eq:I_SEI}
\end{equation}
Then, the characteristic equations are given by
\begin{align}
  \frac{\mathrm{d}c}{\mathrm{d}s}&=\chi_{c}=(\alpha_{2}-\alpha_{1})c\upsilon-\alpha_{2}c,\label{eq:chara_c}\\
  \frac{\mathrm{d}\beta}{\mathrm{d}s}&=\chi_{b}=\alpha_{1}\beta,\label{eq:chara_beta}\\
  \frac{\mathrm{d}\delta}{\mathrm{d}s}&=\chi_{\delta}=(\alpha_{2}-\alpha_{1})\delta,\label{eq:chara_delta}\\
  \frac{\mathrm{d}\upsilon}{\mathrm{d}s}&=\chi_{\upsilon}=(\alpha_{1}-\alpha_{2})\upsilon \left(\upsilon - 1\right),\label{eq:chara_epsilon}\\
  \frac{\mathrm{d}\mu_{S}}{\mathrm{d}s}&=\chi_{\mu_{S}}=0,\label{eq:chara_muS}\\
  \frac{\mathrm{d}\mu_{E}}{\mathrm{d}s}&=\chi_{\mu_{E}}=-(\alpha_{2}-\alpha_{1})\delta,\label{eq:chara_muE}\\
  \frac{\mathrm{d}\mu_{I}}{\mathrm{d}s}&=\chi_{\mu_{I}}=0,\label{eq:chara_muI}\\
  \frac{\mathrm{d}k_{E}}{\mathrm{d}s}&=\chi_{k_{E}}=\alpha_{2}k_{E},\label{eq:chara_kE}\\
  \frac{\mathrm{d}k_{I}}{\mathrm{d}s}&=\chi_{k_{I}}=\alpha_{1}k_{I}.\label{eq:chara_kI}  
\end{align}
The universal parameter invariants are first integrals of Eqs.~\eqref{eq:chara_c}--\eqref{eq:chara_kI} that are independent of the arbitrary coefficients $\alpha_{1}$ and $\alpha_{2}$. By Eqs.~\eqref{eq:chara_muS} and~\eqref{eq:chara_muI}, two universal parameter invariants are given by
\begin{equation}
    I_{1}=\mu_{I},\quad I_{2}=\mu_{S},
\label{eq:SEI_I_1_and_I_2_app}
\end{equation}
since the corresponding infinitesimals are zero, i.e. $\chi_{\mu_{I}}=\chi_{\mu_{S}}=0$. Thus, $\mu_{I}$ and $\mu_{S}$ are the only two parameters that are directly structurally identifiable. 

All of the remaining rate parameters are unidentifiable, and next we set out to find the remaining universal parameter invariants. Combining Eqs.~\eqref{eq:chara_delta} and~\eqref{eq:chara_muE}, we obtain
\begin{equation}
  \frac{\mathrm{d}\delta}{\mathrm{d}\mu_{E}}=-1,
  \label{eq:chara_I3}
\end{equation}
and the corresponding universal parameter invariant is given by
\begin{equation}
  I_{3}=\delta+\mu_{E}.
  \label{eq:I3_app}
\end{equation}
Combining Eqs.~\eqref{eq:chara_beta} and~\eqref{eq:chara_kI}, we obtain
\begin{equation}
  \frac{\mathrm{d}\beta}{\mathrm{d}k_{I}}=\frac{\beta}{k_{I}},
  \label{eq:chara_I4}
\end{equation}
and the corresponding universal parameter invariant is given by
\begin{equation}
  I_{4}=\frac{\beta}{k_{I}}.
  \label{eq:I4_app}
\end{equation}
Combining Eqs.~\eqref{eq:chara_delta} and~\eqref{eq:chara_epsilon}, we obtain
\begin{equation}
  \frac{\mathrm{d}\upsilon}{\mathrm{d}\delta}=\frac{\upsilon(1-\upsilon)}{\delta},
  \label{eq:chara_I5}
\end{equation}
and the corresponding universal parameter invariant is given by
\begin{equation}
  I_{5}=\frac{\upsilon}{\delta(1-\upsilon)}.
  \label{eq:I5_app}
\end{equation}

These five invariants are simple to calculate as it is obvious how the arbitrary coefficients $\alpha_{1}$ and $\alpha_{2}$ are eliminated. For the two remaining invariants involving the parameters $c$ and $k_{E}$ some algebraic manipulations are required to eliminate the coefficients $\alpha_{1}$ and $\alpha_{2}$ in order to find the corresponding universal parameter invariants. Starting with the parameter $c$, we consider the product $c\upsilon$. Using the characteristic equations in Eqs.~\eqref{eq:chara_c} and~\eqref{eq:chara_epsilon} yields
\begin{equation}
  \frac{\mathrm{d}(c\upsilon)}{\mathrm{d}s}=\upsilon\chi_{c}+c\chi_{\upsilon}=-\alpha_{1}(c\upsilon),
  \label{eq:chara_ceps}
\end{equation}
and combining the resulting characteristic equation for $c\upsilon$ with that in Eq.~\eqref{eq:chara_beta} yields
\begin{equation}
  \frac{\mathrm{d}(c\upsilon)}{\mathrm{d}\beta}=-\frac{(c\upsilon)}{\beta}.
  \label{eq:chara_I6}
\end{equation}
The corresponding universal parameter invariant is therefore given by
\begin{equation}
I_{6}=\beta{c}\upsilon.
\label{eq:I6_app}
\end{equation}
Finally, for the parameter $k_{E}$ we consider the quotient $\delta/k_{E}$. Using the characteristic equations in Eqs.~\eqref{eq:chara_delta} and~\eqref{eq:chara_kE} we obtain the following characteristic equation for the quotient $\delta/k_{E}$
\begin{equation}
\frac{\mathrm{d}}{\mathrm{d}s}\left(\frac{\delta}{k_{E}}\right)=\frac{k_{E}\chi_{\delta}-\delta\chi_{k_{E}}}{k_{E}^{2}}=-\alpha_{1}\left(\frac{\delta}{k_{E}}\right),
\label{eq:chara_deltakE}
\end{equation}
and combining this characteristic equation with that in Eq.~\eqref{eq:chara_beta} yields
\begin{equation}
\frac{\mathrm{d}(\delta/k_{E})}{\mathrm{d}\beta}=-\frac{(\delta/k_{E})}{\beta}.
\label{eq:chara_I7}
\end{equation}
The last universal parameter invariant is therefore
\begin{equation}
I_{7}=\frac{\beta\delta}{k_{E}}.
\label{eq:I7_app}
\end{equation}

\end{document}